\newtheorem{thm}{Theorem}[section]
\newtheorem{cor}[thm]{Corollary}
\newtheorem{prop}[thm]{Proposition}
\theoremstyle{definition}
\theoremstyle{remark}
\newtheorem{rem}[thm]{Remark}
\numberwithin{equation}{section}
\newcommand{\bq}{{\bf q}}
\newcommand{\bM}{{\bf M}}
\newcommand{\bz}{{\bf z}}
\newcommand{\br}{{\bf r}}
\newcommand{\mm}{{\bar{\mu}}}
\newcommand{\cc}{c}
\newcommand{\rx}{r^*_x}
\newcommand{\ry}{r^*_y}
\newcommand{\ryo}{r_{y0}}
\newcommand{\rxo}{r_{x0}}
\newcommand{\rz}{r^*_z}
\renewcommand{\marginpar}[1]{}
\begin{document}

\title[Sun-Jupiter-Hektor-Skamandrios ]
{Hill four-body problem with oblate tertiary: an application to the Sun-Jupiter-Hektor-Skamandrios system }%

\author[J. Burgos-Garc\'ia]{Jaime Burgos-Garc\'ia}
\address{
Autonomous University of Coahuila, C.P. 25020,
Saltillo, Mexico}
\email{jburgos@uadec.edu.mx }

\author[A. Celletti]{Alessandra Celletti}
\address{
Department of Mathematics, University of Roma Tor Vergata, Via
della Ricerca Scientifica 1, 00133 Roma (Italy)}
\email{celletti@mat.uniroma2.it}

\author[C. Gales]{Catalin Gales}
\address{
Department of Mathematics, Al. I. Cuza University, Bd. Carol I 11,
700506 Iasi (Romania)}
\email{cgales@uaic.ro}

\author[M. Gidea]{Marian Gidea$^\dag$}
\address{
Department of Mathematical Sciences, Yeshiva University,
New York, NY 10016 (USA)}
\email{Marian.Gidea@yu.edu}

\author[W.-T. Lam]{Wai-Ting Lam$^\ddag$}
\address{
Department of Mathematical Sciences, Yeshiva University,
New York, NY 10016 (USA)}
\email{WaiTing.Lam@yu.edu}

\begin{abstract}
We consider a restricted four-body problem with a precise
hierarchy between the bodies: two point-mass bigger bodies, a
smaller one with oblate shape, and an infinitesimal body in the neighborhood
of the oblate body. The three heavy bodies are assumed to move in
a plane under their mutual gravity, and the fourth  body moves
under the gravitational influence of the three heavy bodies, but
without affecting them.

We start by finding the triangular central configurations of the three
heavy bodies; since one body is oblate, the triangle is isosceles,
rather than equilateral as in the point mass case. We assume
that the three heavy bodies are in such a central configuration
and we perform a Hill's approximation of the equations of motion
describing the dynamics of the infinitesimal body in a
neighborhood of the oblate body. Through the use of Hill's
variables and a limiting procedure, this approximation amounts to
sending the two other bodies to infinity. Finally, for the Hill
approximation, we find the equilibrium points of the infinitesimal body and
determine their stability. As a motivating example, we consider
the dynamics of the moonlet Skamandrios of Jupiter's  Trojan
asteroid Hektor.
\end{abstract}
\marginpar{MG: abstract edits}

\maketitle

\section{Introduction}\label{section:introduction} \marginpar{MG: introduction edits}

The discovery of binary asteroids has led to considering  dynamical
models formed by four bodies, two of them being typically the Sun
and Jupiter. Among possible four-body models (see also
\cite{HOWELL1986,scheeres1998restricted,gabern2003restricted,scheeres2005restricted,Alvarez2009,Delgado_2012,
burgos2013blue,Burgos_2016,Kepley_James_2017}), a relevant role is
played by the models in which three bodies lie on a triangular
central configuration. Given that asteroids have often a (very)
irregular shape, it is useful to start by investigating the case
in which one body has an oblate shape. Among the different
questions that this model may rise, we concentrate on the
existence of equilibrium points and the corresponding linear
stability analysis. Within such framework, we consider a four-body
simplified model and we concentrate on the specific example given
by the Trojan asteroid 624 Hektor, which is located close to the
Lagrangian point  $L_4$ of the Sun-Jupiter system, and its small
moonlet.

In our model Sun, Jupiter and Hektor form an isosceles triangle
(nearly equilateral) whose shape remains unchanged over time. The
small body represents Hektor's moonlet Skamandrios. Obviously, we
could also replace the moonlet by a spacecraft orbiting Hektor. The
system Sun-Jupiter-Hektor-Skamandrios plays a relevant role for
several reasons. Indeed, Hektor is the largest Jupiter Trojan, it
has one of the most elongated shapes among the bodies of its size
in the Solar system, and it is the only known Trojan to possess a
moonlet (see, e.g., \cite{Dvorak1} for stability regions of
Trojans around the Earth and \cite{LC} for dissipative effects
around the triangular Lagrangian points).


The study of asteroids with satellites presents a special interest
for planetary dynamics, as they provide information about
constraints on the formation and evolution of the Solar system.

Another motivation to study the dynamics of a small body near a
Trojan asteroid comes from astrodynamics, as NASA prepares the
first mission, Lucy, to the Jupiter's Trojans, which is planned to
be launched in October 2021 and visit seven different asteroids: a
Main Belt asteroid and at least five Trojans.


As a model for the Sun-Jupiter-Hektor dynamics, we consider a
system of three bodies of  masses $m_1 \geq m_2 \geq m_3$, which
move in circular orbits under mutual gravity, and form a
triangular central configuration. We refer to these bodies as the
primary, the secondary, and the tertiary, respectively. We assume
that the first two bodies of masses $m_1, m_2$ are spherical and
homogeneous, so they can be treated as point masses, while the
third body of mass $m_3$ is oblate. We describe the gravitational
potential of $m_3$ in terms of spherical harmonics, and we only
retain the most significant ones.  We show  the existence of a
corresponding triangular central configuration, which turns out to
be an isosceles triangle; if the oblateness of the mass $m_3$ is
made to be zero, the central configuration becomes the well-known
equilateral triangle Lagrangian central configuration. We stress
that when $m_3$ is oblate, the central configuration is not the
same as in the non-oblate case, since the overall gravitational
field of $m_3$ is no longer Newtonian;  it is well known that
central configurations depend on the nature of the gravitational
field (see, e.g.,
\cite{Corbera2004,Arredondo_Perez-Chavela,diacu2016central,Martinez2017}).
We note that there exist  papers in the literature (e.g.,
\cite{Asique2016}), which consider systems of three bodies, with
one of the bodies non-spherical, which are assumed to form an
equilateral triangle central configuration. Such assumption, while
it may lead to very good approximations, is not physically
correct.

The moonlet Skamandrios is represented by a fourth body, of
infinitesimal mass, which moves in a vicinity of $m_3$  under the
gravitational influence of   $m_1, m_2, m_3$, but without
affecting their motion. We consider the motion of the
infinitesimal mass taking place in the three-dimensional space; it
is not restricted to the plane of  motion of the three heavy
bodies. This situation is referred to as the spatial circular
restricted four-body problem, and can be described by an
autonomous Hamiltonian system of $3$-degrees of freedom.

We `zoom-in'  onto the dynamics  in a small neighborhood of $m_3$
by  performing a Hill's approximation of the  restricted four-body
problem. This is done by a rescaling of the coordinates  in terms
of $m_3^{1/3}$,  writing  the associated  Hamiltonian in the
rescaled coordinates as a power series in $m_3^{1/3}$, and
neglecting all the terms of order $O(m_3^{1/3})$  in  the expansion,
since such terms are  small when $m_3$ is small.
This yields an approximation of the motion of
the massless particle in an $O(m_3^{1/3})$-neighborhood  of $m_3$,
while $m_1$ and $m_2$ are `sent to infinity' through the
rescaling. \marginpar{MG:description of the Hill approximation changed}
This model is an extension of the
classical lunar Hill problem \cite{Hill}. Since the tertiary
is assumed to be oblate, and the corresponding central
configuration formed by the three heavy bodies  is not an
equilateral triangle anymore, this model also extends Hill's
approximation of the restricted four-body problem  developed in
\cite{Burgos_Gidea}.

The Hill approximation is more advantageous to utilize for this
system than the restricted four-body problem, since it allows for
an analytical treatment, and yields more accurate numerical
implementations when realistic parameters are used.  The main
numerical difficulty in the restricted four-body problem is the
large differences of scales among the relevant parameters, i.e.
the mass of Hektor is much smaller than the masses of the other
two heavy bodies. The rescaling of the coordinates involved in the
Hill  approximation reduces the difference of scales of the
parameters to more manageable quantities; more precisely, in
normalized units  the oblateness effect in the restricted
four-body problem is of the order $O(10^{-15})$, while in the Hill
approximation is of the order $O(10^{-7})$ (see Section
\ref{sec:Hill_system} for details).

Once we have established the model for the  Hill four-body problem
with oblate tertiary, we study the equilibrium points and their
linear stability. We find that there are $2$ pairs of symmetric
equilibrium points on each of the $x$-, $y$-, and $z$-coordinate
axes, respectively. The equilibrium points on the $x$- and
$y$-coordinate axes  are just a continuation of the corresponding
ones for the  Hill four-body problem with non-oblate tertiary
\cite{Burgos_Gidea}. The equilibrium points on  the $z$-coordinate
axis constitute a  new feature of the model. In the case of
Hektor, these equilibrium points turn out to be outside of the
body of the asteroid but very close to the surface, so they are of
potential interest for low altitude orbit space missions, such as
the one of  NASA/JPL's Dawn mission around Vesta
(\cite{delsate2011analytical}).

This work is organized as follows. In Section \ref{sec:model} we
describe in full details the restricted four-body model in which
the tertiary is oblate; in particular, we describe the isosceles triangle central configuration
of three bodies
in which two bodies are point masses and the third is oblate.
Hill's approximation is introduced in
Section \ref{sec:Hill}. The determination of the equilibria and
their stability is given in Section \ref{sec:linear}.

\section{Restricted four-body problem with oblate tertiary}\label{sec:model}
In this section we develop a model for
a restricted four-body problem, which consists of two bigger
bodies (e.g., the Sun and Jupiter), a smaller body -- called
tertiary -- with oblate shape (e.g. an asteroid),
and an infinitesimal mass (e.g.,
moonlet) around the tertiary.

As mentioned in Section \ref{section:introduction}, we consider the three masses
$m_1 \geq m_2 \geq m_3$ as moving under the mutual gravitational attraction;
the bodies with masses $m_1$ and $m_2$ are considered as point masses, while $m_3$ is the oblate body.
We normalize the units of mass so that $m_1+m_2+m_3=1$.

We assume that the bodies with masses $m_1$, $m_2$, $m_3$ move on a triangular central configuration,
which will be determined in Section~\ref{sec:central_config}, once the gravitational field of the
oblate body has been discussed in Section~\ref{sec:nonspheriacal}. We will concentrate on the specific example
given by the asteroid Hektor and its moon Skamandrios, where Hektor moves on a central configuration
with Jupiter and the Sun. Orbital and physical values are given in Section~\ref{section:data}. The positions
of the three man bodies in the triangular central configuration is computed in Section~\ref{sec:location_central_config},
while the equations of motion of the moonlet -- with infinitesimal mass moving
in the vicinity of $m_3$ -- are given in Section~\ref{sec:4BP}.

\subsection{Data on the Sun-Jupiter-Hektor-Skamandrios system}\label{section:data}

The models which we will develop below will be applied to the case of the Sun-Jupiter-Hektor-Skamandrios system.
We extract the data for this system from \cite{JPL,Marchis,DESCAMPS2015}.

Hektor is approximately located at  the Lagrangian point $L_4$ of the Sun-Jupiter system.
According to \cite{DESCAMPS2015}, Hektor is approximately $416 \times 131 \times 120$ km in size, and its shape can be approximated by a dumb-bell figure;
the equivalent radius (i.e., the radius of a sphere with the same
volume as the asteroid) is $R_H=92$ km\footnote{Note that \cite{DESCAMPS2015} claims that there are some typos
in the values reported in \cite{Marchis}.}.\marginpar{MG:description changed}

Hektor spins very fast, with a rotation period of approximately $6.92$ hours (see the JPL Solar System Dynamics archive
\cite{JPL}).

The moonlet Skamandrios orbits around Hektor at a distance of approximately $957.5$ km, with an orbital period of $2.965079$ days;  see \cite{DESCAMPS2015}.
Its orbit is highly inclined, at approximately $50.1^\circ$ with respect to the orbit of Hektor, which justifies choosing as  a model the spatial restricted  four-body problem rather than the planar one; see \cite{Marchis}.

We also note that the inclination of Hektor is approximately $18.17^\circ$ (see \cite{JPL}).
Although a more refined model should include a non-zero inclination, we will consider  that Sun-Jupiter-Hektor
move in the same plane, an assumption that is needed in order for the three bodies to form a central configuration.
We will further assume that the axis of rotation of Hektor is perpendicular to the plane of motion.

For the masses of Sun, Jupiter and Hektor we use the values of $m_1= 1.989\times10^{30}$ kg, $m_2=1.898\times10^{27}$, and $m_3=7.91\times10^{18}$ kg, respectively. For the average distance Sun-Jupiter we use the value  $778.5\times 10^6$ km.

In Figure \ref{fig:Hektor_forces} we provide a comparison between the strength of the different forces acting on
the moonlet: the Newtonian gravitational attraction of Hektor, Sun, Jupiter, and the effect of the non-spherical shape of the asteroid, limited to  the  the so-called $J_2$ coefficient, which will be introduced in Section \ref{sec:nonspheriacal}.

\begin{figure}\label{fig:Hektor_forces}
\includegraphics[width=0.8\textwidth]{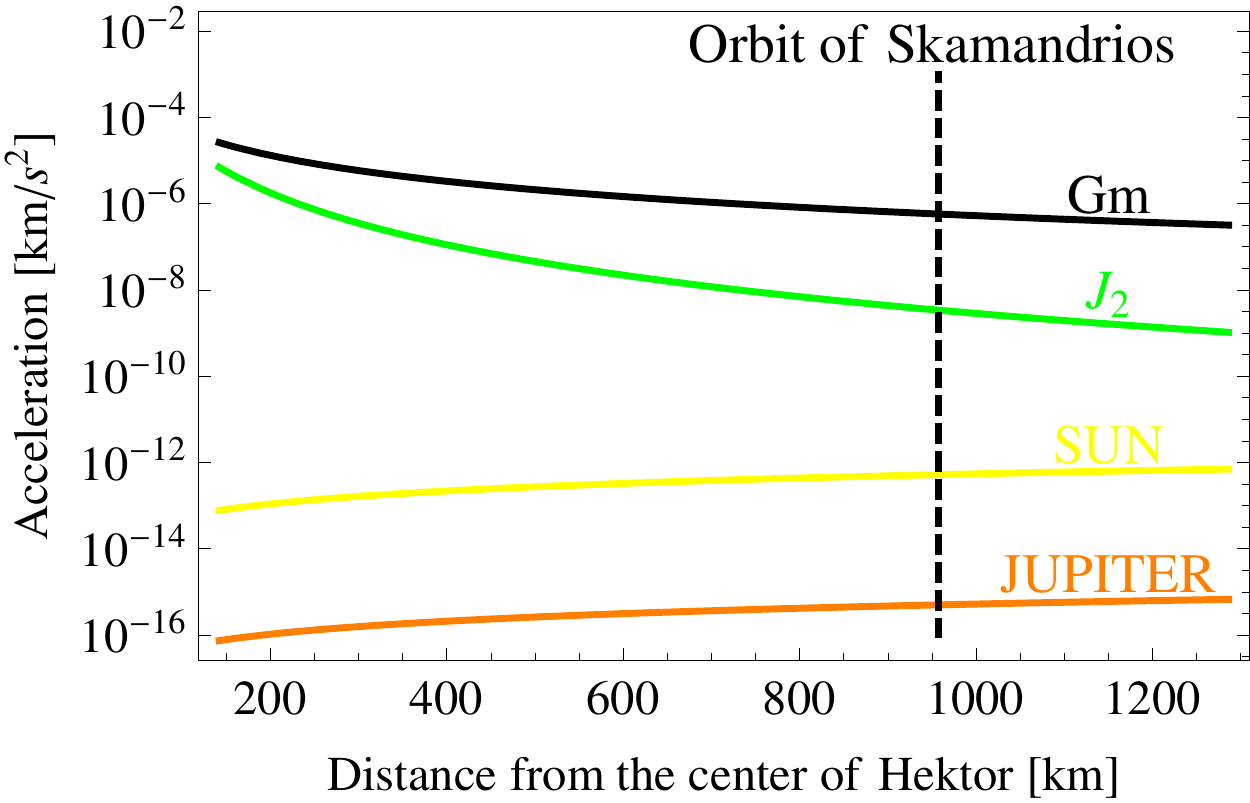}
\caption{Order of magnitude of the different perturbations acting
on the moonlet as a function of its distance from Hektor. The
terms Gm, Sun and Jupiter denote, respectively, the monopole terms
of the gravitational influence of Hektor, the attraction of the
Sun and that of Jupiter. $J_2$ represents the  perturbation due to the
non-spherical shape of Hektor. The actual distance of the moonlet
is indicated by a vertical line.}
\end{figure}

\subsection{The gravitational field of a non-spherical body}\label{sec:nonspheriacal}
We first consider that the tertiary body, representing Hektor, has a general (non-spherical) shape. The gravitational potential, relative to a  reference frame centered at the barycenter of the tertiary and rotating with the body, is given in spherical coordinates $(r,\phi,\lambda)$ by (see, e.g.,  \cite{celletti2018dynamics}):

\begin{equation*}\label{potential}
V(r,\phi,\lambda)={{\mathcal{G} m_H}\over r}\ \sum_{n=0}^\infty \Bigl({R_H\over r}\Bigr)^n\ \sum_{m=0}^n P_{nm}(\sin\phi)\ (C_{nm}\cos m\lambda+
S_{nm}\sin m\lambda)\ ,
\end{equation*}
where $\mathcal{G}$ is the gravitational constant, $m_H$ is the mass of Hektor,  $R_H$ is its average radius,
 $P_{nm}$ are the Legendre polynomials defined as
\begin{equation*}\begin{split}
P_n(x)&= {1\over {2^n n!}}\ {{d^n}\over {dx^n}}(x^2-1)^n\\
P_{nm}(x)&=(1-x^2)^{m\over 2}\ {{d^m}\over {dx^m}}P_n(x)\ ,
\end{split}
\end{equation*}
and  $C_{nm}$ and $S_{nm}$ are the spherical harmonics coefficients.

In the case of an ellipsoid of semi-axes $a\geq b\geq c$, we have the following explicit formulas (\cite{Boyce1997}):
\begin{eqnarray*}
S_{n,m}&=&0,\\
C_{2p+1,2q}&=&0,\\
C_{2p,2q+1}&=&0,\\
C_{2p,2q}&=&\displaystyle{3\over {R_H^{2p}}} {{p!(2p-2q)!}\over {2^{2q}(2p+3)(2p+1)!}}
(2-\delta_{0q})\\
&&\displaystyle\sum_{i=0}^{\lfloor{{p-q}\over 2}\rfloor} {{(a^2-b^2)^{q+2i}[c^2-{1\over 2}(a^2+b^2)]^{p-q-2i}}\over
{16^i(p-q-2i)!(p+q)!i!}}. \end{eqnarray*}

In particular, $C_{20}$  and $C_{22}$ turn out to be given by simple expressions
\begin{eqnarray}
C_{20}&=&\frac{c^2-\frac{a^2}{2}-\frac{b^2}{2}}{5R_H^2},\\
C_{22}&=&\frac{\frac{a^2}{4}-\frac{b^2}{4}}{5R_H^2}.
\end{eqnarray}

For the Sun-Jupiter-Hektor data,  we take $a=208$ km, $b=65.5$ km, $c=60$ km, $R_H=92$ km, following  \cite{DESCAMPS2015} (see Section \ref{section:data}),
we calculate the following coefficients:
\[\begin{tabular}{llll}
  $C_{20}=-0.476775$; &  $C_{22}=0.230232 $; & & \\
  $C_{40}=0.714275$; &  $C_{42}=-0.078406 $; & $C_{44}=0.009465  $; &\\
  $C_{60}=-1.54769$; &  $C_{62}=0.076832  $; &$C_{64}=-0.002507   $; & $C_{66}=0.000201  $.\\
\end{tabular}\]

Notice that each term $C_{2p,2q}$ is multiplied in \eqref{potential} by the factor $R_H^{2p}/r^{2p+1}$.
For $r$ equal to the average distance from the moonlet to the asteroid, we have $R_H/r\approx 0.096$.\marginpar{MG:explanation on $r$ in $R_h/r$} Therefore, in the following we will ignore the effect of the coefficients $C_{2p,2q}$, with $p\geq 2$,
which are at least of $O(R_H^4/r^5)$.

Note that the value of $C_{20}$ computed above is significantly bigger in absolute value than the one reported
in \cite{Marchis}, which equals $-0.15$. The reason is that we use different estimates for the size of Hektor,
following \cite{DESCAMPS2015} (see Section \ref{section:data}).

If we consider a frame centered at the barycenter of the tertiary, and which rotates with the angular
velocity of the tertiary about the primary, the time dependent gravitational potential takes of the form
\begin{equation*}\label{potential_rot2}
V(r,\phi,\lambda)={{\mathcal{G} m_H}\over r}\ \sum_{n=0}^{2} \Bigl({R_H\over r}\Bigr)^n\ \sum_{m=0}^n P_{nm}(\sin\phi)\ C_{nm}\cos (m(\lambda+\Theta t)),
\end{equation*}
where $\Theta$ represents the frequency of the spin of Hektor.

For $n=2$, $m=0$ the corresponding term $C_{nm}\cos (m(\lambda+\Theta t))$ in the summation \eqref{potential_rot2}
is equal to $C_{20}$ and is independent of time; for $n=2$, $m=2$ the corresponding term $C_{nm}\cos (m(\lambda+\Theta t))$
is equal to $C_{22}\cos (2(\lambda+\Theta t))$, so is time-dependent. We do not consider the other terms
in the sum \eqref{potential_rot2}.

Since the ratio of the rotation period of Hektor to the orbital period of the moonlet is relatively small,
approximately $0.09740991$, in this paper we will only consider the average effect of
$C_{22}\cos (2(\lambda+\Theta t))$ on the moonlet, which is zero.

In conclusion, in the model below we will only consider the effect of $C_{20}:=-J_2<0$, which amounts to approximating Hektor
as an oblate body (i.e., an ellipsoid of revolution obtained by rotating an ellipse about its minor axis);
the dimensionless quantity $J_2$ is referred to as the {\sl zonal harmonic}  in the gravitational potential.
The term corresponding to $C_{20}$ is the larger one, followed by that corresponding to $C_{22}$;
however, since the $C_{22}$ term introduces a time dependence, thus further complicating the model,
we start by disregarding it and plan to study its effect in a future work.

\subsection{Central configurations for the three-body problem with one oblate body}
\label{sec:central_config}

We now consider only the  three heavy bodies, of masses $m_1 \geq m_2 \geq m_3$, with the body of mass $m_3$ being oblate,  in which case we  only take into account the term corresponding to $C_{20}=-J_2$ in \eqref{potential}.  We write the
approximation of the gravitational potential of the tertiary \eqref{potential_rot2} in both Cartesian and
spherical coordinates (in the frame of the tertiary and rotating with the body):
\begin{equation}\begin{split}
\label{eqn:C20} V(x,y,z)&=\frac{m_3}{r}- \frac{m_3}{r} \left(\frac{R_3}{r}\right)^2 \left(\frac{J_2}{2}\right)
\left (3 \left (\frac{z}{r}\right)^2 -1\right)\\
&=\frac{m_3}{r}+ \frac{m_3}{r} \left(\frac{R_3}{r}\right)^2 \left(\frac{C_{20}}{2}\right)\left (3 \sin\phi^2 -1\right),
\end{split}
\end{equation}
where $m_3$ is the normalized mass of Hektor (the sum of the three masses is the unit of mass),
$R_3:=R_H$ is the average radius of Hektor in normalized units (the  distance between Sun and Hektor is the unit of distance), the gravitational constant is normalized to $1$,  and $\sin\phi=z/r$.

We want to find the triangular central configurations formed by $m_1$, $m_2$, $m_3$; we will follow the approach in \cite{Arredondo_Perez-Chavela}. Since for a central configuration the three bodies lie in the same plane, in the gravitational field \eqref{eqn:C20} of $m_3$ we set $\phi=0$, obtaining
\begin{equation}
\label{eqn:C20plane} V(q)=\frac{m_3}{r}+ \frac{Cm_3}{r^3},
\end{equation}
where $q=(x,y)$ is the position vector of an arbitrary point in the plane, $r=\|q\|$ is the distance from $m_3$, and we denote \begin{equation}
\label{eqn:C_const} C=R_3^2J_2/2>0.\end{equation}

Let $q_i$ be the position vector of the mass $m_i$, for $i=1,2,3$,  in an inertial frame centered at the
barycenter of the three bodies.

The equations of motion of the three bodies are
 \begin{equation}\label{eqn:3bp}
    \begin{split}
    m_1\ddot{q}_1&={m_1m_2(q_2-q_1)}\frac{1}{\|q_2-q_1\|^3}+{m_1m_3(q_3-q_1)}\left[\frac{1}{\|q_3-q_1\|^3}+\frac{3C}{\|q_3-q_1\|^5}\right],\\
    m_2\ddot{q}_2&={m_2m_1(q_1-q_2)}\frac{1}{\|q_1-q_2\|^3}+{m_2m_3(q_3-q_2)}\left[\frac{1}{\|q_3-q_2\|^3}+\frac{3C}{\|q_3-q_2\|^5}\right],\\
    m_3\ddot{q}_3&={m_3m_1(q_1-q_3)}\frac{1}{\|q_1-q_3\|^3}+{m_3m_2(q_2-q_3)}\frac{1}{\|q_2-q_3\|^3},
    \end{split}
 \end{equation}
where the last terms in the first two equations are due to \eqref{eqn:C20plane}, and the gravitational constant is normalized
to $\mathcal{G}=1$.
Denote $r_{ij}=\|q_i-q_j\|$, for $i\neq j$,   $\bq=(q_1,q_2,q_3)$, and $\bM=\textrm{diag}(m_1, m_1,m_2 ,m_2, m_3,  m_3)$  the $6\times 6$ matrix with $2$ copies of each mass along the diagonal. Then \eqref{eqn:3bp} can be written as
\begin{equation}\label{eqn:v3bp}
    {\bM}\ddot{\bq}=\nabla U(\bq),
\end{equation}
where
\begin{equation}\label{eqn:U}
    U(\bq)={m_1m_2}\frac{1}{r_{12}}+{m_1m_3}\left(\frac{1}{r_{13}}+\frac{C}{r_{13}^3}\right)+
    {m_2m_3}\left(\frac{1}{r_{23}}+\frac{C}{r_{23}^3}\right)
\end{equation}
is the potential for the three body problem with oblate $m_3$.

Let us assume that the center of mass is fixed at the origin, i.e.,
\begin{equation}\label{eqn:cm}
   \bM\bq= \sum_{i=1}^3 m_iq_i=0.
\end{equation}

We are  interested in \emph{relative equilibrium} solutions for the motion of the three bodies,
which are characterized by the fact they become equilibrium points in a uniformly rotation frame.

Denote by $R(\theta)$ the $6\times 6$  block diagonal matrix consisting of  $3$ diagonal blocks the form
\[ \left(
     \begin{array}{rr}
       \cos(\theta) & -\sin(\theta) \\
       \sin(\theta) & \cos(\theta) \\
     \end{array}
   \right)\in SO(2).
\]
Substituting $\bq(t)=R(\omega t)\bz(t)$ for some $\omega\in\mathbb{R}$ in \eqref{eqn:v3bp}, where $\bz=(z_1,z_2,z_3)\in\mathbb{R}^6$, we obtain
\[\label{eqn:E}
\bM\left(\ddot \bz+2\omega J\dot\bz-\omega^2\bz\right)=\nabla U,
\]
where $J$ is the block diagonal matrix consisting of  $3$ diagonal blocks the form
\begin{equation}\label{eqn:eq_x} \left(
     \begin{array}{rr}
       0 & -1 \\
       1 & 0 \\
     \end{array}
   \right).
\end{equation}
The condition for an equilibrium point of \eqref{eqn:E} yields the algebraic equation
\begin{equation}\label{eqn:CC1}
    \nabla U(\bz)+\omega^2 \bM \bz=0.
\end{equation}

A solution $\bz$  of the three-body problem satisfying \eqref{eqn:CC1} is referred to as a  \emph{central configuration}. This is equivalent to  $\ddot z_i=-\omega^2 z_i$, for $i=1,2,3$, meaning that the accelerations of the masses are proportional to the corresponding position vectors, and all accelerations are pointing towards the center of
mass. Thus, the solution $\bq(t)$ is a relative equilibrium solution if and only if $\bq(t)=R(\omega t)\bz(t)$ with $\bz(t)$ being a central configuration solution, and the rotation $R(\omega t)$ being a circular solution of the Kepler problem.

Let $I(\bz)=\bz ^T M \bz=\sum_{i} m_i \|z_i\|^2$ be the moment of inertia. It is easy to see that this is a conserved quantity for the motion, that is, $I(\bz(t))=\bar I$ for
some $\bar I$ at all $t$.  Using Lagrange's second identity (see, e.g., \cite{Gidea-Niculescu}), and that $\bM\bz=0$,
normalizing the masses so that $\sum_{i=1}^3 m_i=1$, the moment of inertia can be written as:
\begin{equation}\label{eqn:I}I(\bz)=\sum_{1\leq i<j\leq 3} m_{ij}  \|z_i-z_j\|^2:=\sum_{1\leq i<j\leq 3} m_{ij}  r_{ij}^2.\end{equation}
Thus, central configurations correspond to critical points of the potential $U$ on the sphere $\bz^TM\bz=1$, which can be obtained by solving the Lagrange multiplier problem
\begin{equation}\label{eqn:CC_LM_x}
\nabla  f(\bz)=0,\qquad I(\bz)-\bar{I}=0,
\end{equation}
where $f(\bz)=U(\bz)+\frac{1}{2}\omega ^2(I(\bz)-\bar{I})$.
In the above, we used the fact that $\nabla I(\bz)=2\bM\bz$.

We solve this problem in the variables $r_{ij}=\|z_i-z_j\|$ for $1\leq i<j\leq 3$, since both $U$ and $I$ can be written in terms of these variables. This reduces the dimension of the system  \eqref{eqn:CC_LM_x} from $7$ equations to $4$ equations.  Denote $\br=(r_{12}, r_{13},r_{23})$, and let $\tilde{f}(\br)$ be the function $f$ expressed in the variable $\br$, that is $\tilde {f}(\br(\bz))=f(\bz)$. By the chain rule, $\nabla_r \tilde {f} \cdot \left (\frac{\partial\br}{\partial\bz} \right )=\nabla_\bz f(\bz)$.  It is easy to see that the rank of the matrix $\left (\frac{\partial\br}{\partial\bz} \right )$ is maximal provided that $z_1,z_2,z_3$ are not collinear (for details, see \cite{Corbera2004,Arredondo_Perez-Chavela}). As we are looking for triangular central configurations, this condition is satisfied.
Thus, $\nabla_r \tilde {f}(\br)=0$ if and only if $\nabla_\bz f(\bz)=0$. In other words, we can now solve the system \eqref{eqn:CC_LM_x} in the variable $\br$. We obtain
\begin{equation}\label{eqn:CC3}
\begin{split}
-\frac{1}{r_{12}^2}+\omega^2r_{12}=0,\\
-\frac{1}{r_{13}^2}-\frac{3C}{r_{13}^4}+\omega^2r_{13}=0,\\
-\frac{1}{r_{23}^2}-\frac{3C}{r_{23}^4}+\omega^2r_{23}=0,\\
m_1m_2r_{12}^2+m_1m_3r_{13}^2+m_2m_3r_{23}^2=\bar{I}.
\end{split}
\end{equation}
Note that the function $h(r)=-\frac{1}{r^3}-\frac{3C}{r^5}$ has positive derivative
$h'(r)=\frac{3}{r^4}+\frac{15C}{r^6}$ with $C,r>0$, hence it is injective.
Thus, the second and third equation in \eqref{eqn:CC3} yield $r_{13}=r_{23}:=u$.
Solving for $\omega$ in the first and second equation we obtain:
\begin{equation}\label{eqn:omega}
\omega=\sqrt{\frac{1}{r_{12}^3}}=\sqrt{\frac{1}{r_{13}^3}+\frac{3C}{r_{13}^5}}.
\end{equation}
Solving for $r_{12}$ yields:
\begin{equation}\label{eqn:uv}r_{12}:=v=\left (\frac{u^5}{u^2+3C} \right)^{1/3}.\end{equation}
Notice that $C>0$ implies $0<v<u$.

The condition $I(\br)=\bar{I}$ yields
\[m_1m_2 \left (\frac{u^5}{u^2+3C} \right)^{2/3}+(m_1m_3+m_2m_3)u^2=\bar{I},\] which is equivalent to
\[\frac{m_1^3m_2^3u^{10}}{\left(u^2+3C\right)^2}  =\left(\bar{I}-(m_1m_3+m_2m_3)u^2 \right )^3.\]
To simplify the notation, let $u^2=z$, $m_{1}^3m_{2}^3=a$, $m_1m_3+m_2m_3=c$, $3C=b$, obtaining
\[\frac{az^5}{(z+b)^2}= (\bar{I}-cz)^3.\]
The function $k(z)=\displaystyle \frac{az^5}{(z+b)^2}$ has derivative  \[k'(z)= \frac{3az^6+8abz^5+5ab^2z^4}{(z+b)^4}>0\]
and the function $l(z)=(\bar{I}-cz)^3$ has derivative \[l'(z)=-3c(\bar{I}-cz)^2<0.\]
Since \[k(0)=0 \textrm{ and } \lim_{z\to+\infty} k(z)=+\infty,\] and \[l(0)=\bar{I}^3>0\textrm{ and }\lim _{z\to +\infty}l(z)=-\infty,\] the equation $k(z)=l(z)$ has a unique solution with $z>0$. We conclude that for each fixed $\bar{I}$, there is a unique solution of \eqref{eqn:CC3}.
Thus, we have proved the following result.

\begin{prop}\label{prop:CC} In the three-body problem with one oblate primary, for every fixed value $\bar{I}$ of the moment of inertia there exists a unique central configuration, which is an isosceles triangle.
\end{prop}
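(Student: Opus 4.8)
The plan is to characterize the central configurations variationally and then collapse the problem to a single scalar equation whose solvability is visible from elementary monotonicity. First I would recall from \eqref{eqn:CC1} that a planar central configuration of the three heavy bodies is precisely a critical point of the potential $U$ of \eqref{eqn:U} restricted to the inertia ellipsoid $\bz^T\bM\bz=\bar I$; introducing the Lagrange function $f(\bz)=U(\bz)+\tfrac12\omega^2(I(\bz)-\bar I)$ recasts this as the system \eqref{eqn:CC_LM_x}. Since $U$ and $I$ depend on $\bz$ only through the mutual distances $\br=(r_{12},r_{13},r_{23})$, and the map $\bz\mapsto\br$ has a full-rank differential whenever $z_1,z_2,z_3$ are not collinear, solving \eqref{eqn:CC_LM_x} is equivalent to solving $\nabla_{\br}\tilde f(\br)=0$ together with the constraint, i.e. the four scalar equations \eqref{eqn:CC3}.

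Next I would exploit the structure of \eqref{eqn:CC3}. Its second and third equations say $h(r_{13})=h(r_{23})$ for $h(r)=-r^{-3}-3Cr^{-5}$, and since $C>0$ forces $h'(r)>0$ on $r>0$, the function $h$ is injective; hence $r_{13}=r_{23}=:u$, which is exactly the \emph{isosceles} property. Eliminating $\omega$ between the first two equations expresses $r_{12}=v$ as the explicit function $v=(u^5/(u^2+3C))^{1/3}$ of $u$ (with $0<v<u$), so the entire configuration is parametrized by the single variable $u$, equivalently by $z:=u^2$. Substituting into the constraint $m_1m_2v^2+(m_1m_3+m_2m_3)u^2=\bar I$ and clearing denominators produces one equation $k(z)=l(z)$, with $k(z)=az^5/(z+b)^2$ (where $a=m_1^3m_2^3$, $b=3C$) and $l(z)=(\bar I-cz)^3$ (where $c=m_1m_3+m_2m_3$).

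Finally, existence and uniqueness follow by comparing the two sides on $z>0$: differentiation shows $k'(z)>0$ with $k(0)=0$ and $k(z)\to+\infty$, while $l'(z)<0$ with $l(0)=\bar I^3>0$ and $l(z)\to-\infty$; thus $k-l$ is continuous, strictly increasing, negative near $z=0$ and positive for large $z$, so it has exactly one zero. Unwinding the reduction, this unique $z$ yields a unique $u$, hence a unique $v$ and a unique $\omega$, hence a unique central configuration for the prescribed $\bar I$, which is an isosceles triangle. I expect the only genuinely delicate step to be the passage to the distance variables: one must verify that the rank condition on $\partial\br/\partial\bz$ really makes $\nabla_{\br}\tilde f=0$ \emph{equivalent} to $\nabla_{\bz}f=0$, not merely necessary — the triangle hypothesis (non-collinearity of $z_1,z_2,z_3$) is what secures this, and everything downstream is one-variable calculus.
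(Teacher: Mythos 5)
Your proposal is correct and mirrors the paper's own argument step for step: the variational formulation on the inertia constraint, the passage to mutual-distance variables via the rank condition on $\partial\br/\partial\bz$, the injectivity of $h(r)=-r^{-3}-3Cr^{-5}$ yielding $r_{13}=r_{23}$, and the reduction to the single scalar equation $k(z)=l(z)$ settled by monotonicity. No gaps to report.
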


We note that, while \cite{Arredondo_Perez-Chavela} studies central configurations of three oblate bodies (as well as of three bodies under  Schwarzschild  metric),  the isosceles central configuration found above is not explicitly shown there (see Theorem 4 in \cite{Arredondo_Perez-Chavela}).

To put this in quantitative perspective, when we use the data from Section \ref{section:data} in \eqref{eqn:C_const}, we obtain $C=3.329215\times 10^{-15}$.
If we set $u=r_{13}=r_{23}=1$, from \eqref{eqn:uv} we obtain $v=r_{12}=0.9999999999999967=1.0-3.3\times 10^{-15}$. In terms of the  Sun-Jupiter distance $r_{12}=778.5\times 10^6$ km, the distance $r_{13}=r_{23}$  differs from the corresponding distance in the equilateral central configuration by  $2.7\times 10^{-6}$ km.
Practically, this isosceles triangle central configuration is almost an equilateral triangle.

\subsection{Location of the bodies in the triangular central configuration}
\label{sec:location_central_config}
We now compute the expression of the location of the three bodies in the triangular central configuration,
relative to a synodic  frame that rotates together with the bodies,
with the center of mass fixed at the origin, and the location of $m_1$
on the negative $x$-semi-axis. We assume that the masses lie in the $z=0$ plane.
Instead of fixing the value $\bar{I}$ of the moment of inertia, we fix $u=r_{13}=r_{23}=1$
and have $v=r_{12}<1$ given  by \eqref{eqn:uv}. Then, we obtain the following result.

\begin{figure}
\includegraphics[width=0.65\textwidth]{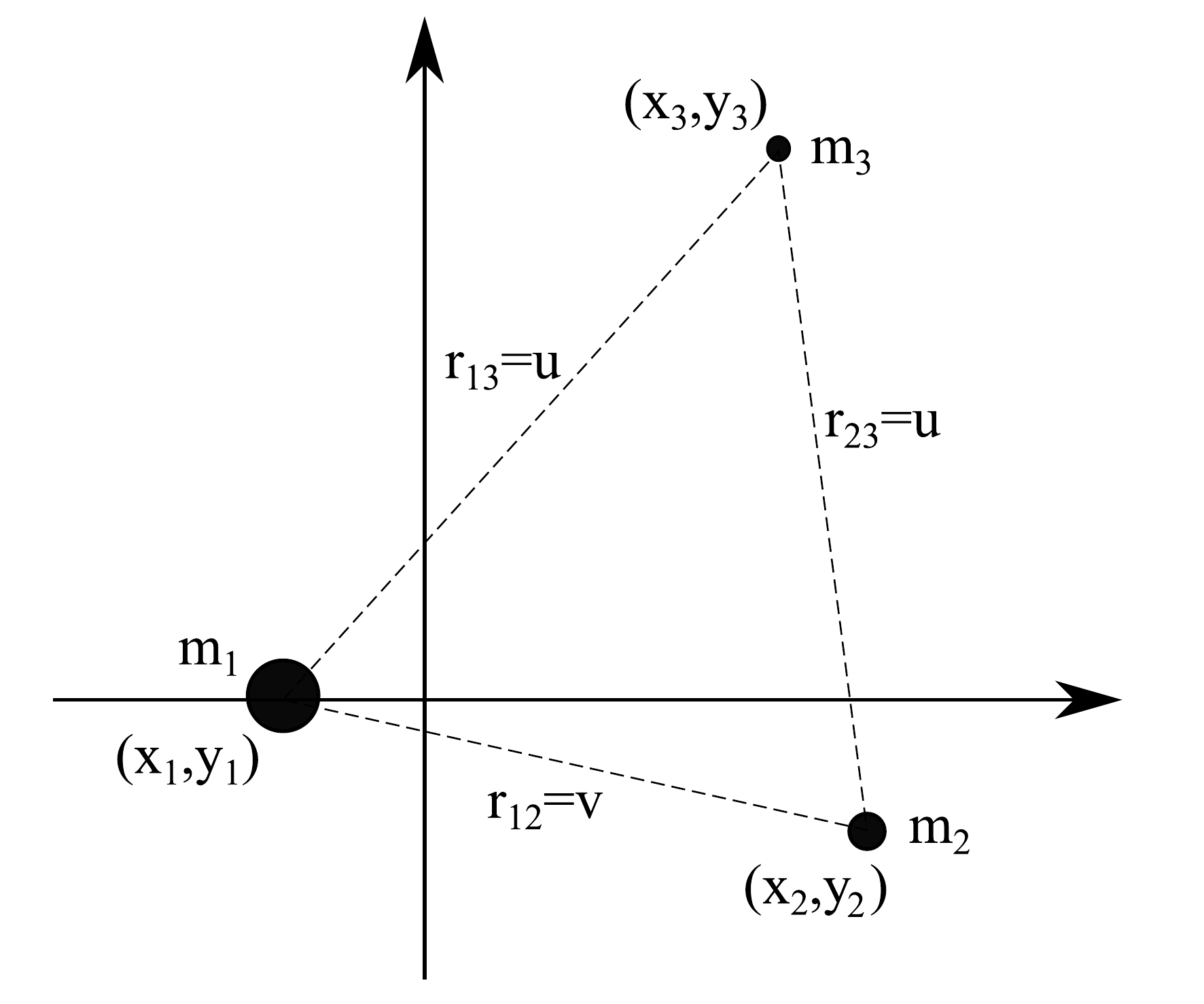}
\caption{Triangular central configuration.}
\label{central_config}
\end{figure}

\begin{prop}
In the synodic reference frame, the coordinates of the three bodies in the triangular central configuration,
satisfying the constraints
\begin{eqnarray}
\label{eqn:1} (x_2-x_1)^2+(y_2-y_1)^2 &=& v^2,\\
\label{eqn:2} (x_3-x_1)^2+(y_3-y_1)^2 &=& 1,\\
\label{eqn:3} (x_3-x_2)^2+(y_3-y_2)^2 &=& 1,\\
\label{eqn:4} m_1x_1+m_2x_2+m_3x_3 &=& 0,\\
\label{eqn:5} m_1y_1+m_2y_2+m_3y_3 &=& 0,\\
\label{eqn:6} m_1+m_2+m_3&=&1,\\
\label{eqn:7} y_1&=&0
\end{eqnarray}
are given by
\begin{equation}
\label{eqn:xy_CC}
\begin{split}
x_1=&-\sqrt{v^2m_2^2+v^2m_2m_3+m_3^2},\\
y_1=&0,\\
x_2=&\frac{2v^2m_2+v^2m_3-2v^2m_2^2-2v^2m_2m_3-2m_3^2}{2\sqrt{v^2m_2^2+v^2m_2m_3+m_3^2}},\\
y_2=&-\frac{v\sqrt{4-v^2} m_3}{2\sqrt{v^2m_2^2+v^2m_2m_3+m_3^2}},\\
x_3=& \frac{v^2m_2+2m_3-2v^2m_2^2-2v^2m_2m_3-2m_3^2}{2\sqrt{v^2m_2^2+v^2m_2m_3+m_3^2}},\\
y_3=&\frac{v\sqrt{4-v^2} m_2}{2\sqrt{v^2m_2^2+v^2m_2m_3+m_3^2}}.
\end{split}
\end{equation}
\end{prop}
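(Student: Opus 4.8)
The plan is to solve the algebraic system \eqref{eqn:1}--\eqref{eqn:7} in three stages: first locate $m_1$, then read off the $x$-coordinates of $m_2$ and $m_3$, and finally recover their $y$-coordinates. The one nontrivial ingredient is the classical identity expressing the distance of a body from the center of mass in terms of the mutual distances. Since $m_1+m_2+m_3=1$ by \eqref{eqn:6} and the center of mass sits at the origin by \eqref{eqn:4}--\eqref{eqn:5}, for any permutation $\{i,j,k\}=\{1,2,3\}$ one has $q_i=m_j(q_i-q_j)+m_k(q_i-q_k)$; squaring and using $2(q_i-q_j)\cdot(q_i-q_k)=r_{ij}^2+r_{ik}^2-r_{jk}^2$ gives
\[
\|q_i\|^2=m_j^2 r_{ij}^2+m_k^2 r_{ik}^2+m_j m_k\bigl(r_{ij}^2+r_{ik}^2-r_{jk}^2\bigr),
\]
together with $q_i\cdot q_j=\tfrac12\bigl(\|q_i\|^2+\|q_j\|^2-r_{ij}^2\bigr)$.

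First I would substitute $r_{12}=v$ and $r_{13}=r_{23}=1$ (that is, \eqref{eqn:1}--\eqref{eqn:3}) and $m_1=1-m_2-m_3$ into the identity above with $i=1$; the combination $r_{12}^2+r_{13}^2-r_{23}^2$ collapses to $v^2$, yielding $\|q_1\|^2=v^2m_2^2+v^2m_2m_3+m_3^2=:S^2$ (note $S>0$ since $m_3>0$). Since $y_1=0$ by \eqref{eqn:7} and $m_1$ is required to lie on the negative $x$-semi-axis, this forces $x_1=-S$, the stated expression. Because $q_1=(-S,0)$, for $i=2,3$ the $x$-coordinate of $q_i$ satisfies $-Sx_i=q_1\cdot q_i$, hence $x_i=-\bigl(\|q_1\|^2+\|q_i\|^2-r_{1i}^2\bigr)/(2S)$; substituting $\|q_2\|^2$ and $\|q_3\|^2$, computed from the same identity and again using $m_1=1-m_2-m_3$, and collecting terms in $m_2,m_3,v$ gives precisely the formulas for $x_2$ and $x_3$ in \eqref{eqn:xy_CC}.

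It remains to pin down the $y$-coordinates. The quickest route is to use \eqref{eqn:2}--\eqref{eqn:3} in the form $y_i^2=r_{1i}^2-(x_i+S)^2$ for $i=2,3$. A short computation shows $x_2+S=(2v^2m_2+v^2m_3)/(2S)$ and $x_3+S=(v^2m_2+2m_3)/(2S)$, and then the identities $4v^2S^2-v^4(2m_2+m_3)^2=v^2m_3^2(4-v^2)$ and $4S^2-(v^2m_2+2m_3)^2=v^2m_2^2(4-v^2)$ give $y_2^2=v^2m_3^2(4-v^2)/(4S^2)$ and $y_3^2=v^2m_2^2(4-v^2)/(4S^2)$, which are nonnegative since $0<v<1$ by \eqref{eqn:uv}. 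Selecting the signs $y_2<0<y_3$ fixes the one remaining reflectional degree of freedom and produces the asserted expressions. One then checks directly that $m_2y_2+m_3y_3=0$, so \eqref{eqn:5} holds, and that \eqref{eqn:4} holds as well (automatically from the defining relations $x_i=-q_1\cdot q_i/S$, or by direct substitution); together with \eqref{eqn:1}--\eqref{eqn:3}, which are built in by construction, this shows that the point \eqref{eqn:xy_CC} solves the full system, which in passing also confirms that the radicands appearing there are real and nonnegative.

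The only real difficulty is bookkeeping: after the substitution $m_1=1-m_2-m_3$ one must carefully expand and recombine polynomials in $m_2,m_3,v$ to recognize the compact forms in \eqref{eqn:xy_CC}, in particular the two $(4-v^2)$-identities above; no conceptual obstacle arises. It is also worth remarking that \eqref{eqn:1}--\eqref{eqn:7} does not over-determine the six unknowns $x_1,y_1,x_2,y_2,x_3,y_3$: equation \eqref{eqn:6} is merely the mass normalization, while fixing the center of mass and imposing $y_1=0$ removes exactly the translational and rotational freedom, leaving a finite solution set from which the stated conventions (``$m_1$ on the negative $x$-semi-axis'' and the chosen orientation of the triangle) single out the configuration \eqref{eqn:xy_CC}.
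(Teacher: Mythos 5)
Your argument is correct, and it reaches \eqref{eqn:xy_CC} by a genuinely different route than the paper. The paper proceeds by direct elimination: it sets $A=x_1-x_2$, $B=x_1-x_3$, uses \eqref{eqn:4}--\eqref{eqn:7} to write $x_1=m_2A+m_3B$ and $y_3=-(m_2/m_3)y_2$, and then solves the reduced system coming from \eqref{eqn:1}--\eqref{eqn:3} (adding two equations and subtracting the third to get $2AB-2\bar\mu y_2^2=v^2$, expressing $B$ through $A$, and solving a quadratic for $A$), with the sign ambiguity resolved at the level of $A$ and $B$. You instead use the barycentric identity $q_i=m_j(q_i-q_j)+m_k(q_i-q_k)$, which converts \eqref{eqn:4}--\eqref{eqn:6} together with the mutual distances into explicit values of $\|q_i\|^2$ and $q_i\cdot q_j$; this gives $x_1=-S$ at once, then $x_2,x_3$ from $x_i=-q_1\cdot q_i/S$, and finally $y_2,y_3$, with the same conventions ($x_1<0$, $y_2<0<y_3$) fixing the solution. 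Your route avoids solving any quadratic, makes the value $S=\sqrt{v^2m_2^2+v^2m_2m_3+m_3^2}$ conceptually transparent, and generalizes verbatim to arbitrary mutual distances (it would recover \eqref{eqn:Baltagiannis} the same way); the paper's elimination is more self-contained, using nothing beyond the constraint equations themselves. Two small slips, neither a real gap: the relations $y_2^2=v^2-(x_2+S)^2$ and $y_3^2=1-(x_3+S)^2$ come from \eqref{eqn:1}--\eqref{eqn:2} (distances from $m_1$), not \eqref{eqn:2}--\eqref{eqn:3}; and in your closing verification \eqref{eqn:3} is not literally ``built in by construction'' (it entered only through the formulas for $\|q_2\|^2,\|q_3\|^2$ in the necessity direction), so a purely sufficiency-style check should verify it by substitution --- it does hold, since $(x_3-x_2)^2+(y_3-y_2)^2=\bigl(4v^2m_2^2+4v^2m_2m_3+4m_3^2\bigr)/(4S^2)=1$.
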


\begin{proof}
Denote by $A=x_1-x_2$ and $B=x_1-x_3$, so $x_2=x_1-A$, $x_3=x_1-B$, and $x_3-x_2=A-B$.
Substituting these in  \eqref{eqn:4} we obtain $(m_1+m_2+m_3)x_1=m_2A+m_3B$. From \eqref{eqn:6} it follows $x_1=m_2A+m_3B=m_3(\mm A+B)$, where we denoted $\mm:=m_2/m_3$.
From \eqref{eqn:7} and \eqref{eqn:5} we have $y_3=-(m_2/m_3)y_2=-\mm y_2$, and $y_3-y_2=-(1+\mm)y_2$.
From \eqref{eqn:1}, we can solve for $y_2$ in terms of $A$ (see \eqref{eqn:11} below). From now on, the objective is to solve for
$A$ and $B$, which in turn will yield $x_1,x_2,x_3,y_2, y_3$.

Equations \eqref{eqn:1}, \eqref{eqn:2}, \eqref{eqn:3} become:
\begin{eqnarray}
\label{eqn:11} A^2+y_2^2 &=& v^2,\\
\label{eqn:12} B^2+\mm^2y_2^2 &=& 1,\\
\label{eqn:13} (B-A)^2+(1+\mm)^2y_2^2 &=& 1.
\end{eqnarray}
Adding \eqref{eqn:11} and  \eqref{eqn:12}, and subtracting \eqref{eqn:13} yields
\begin{equation}
\label{eqn:14}
2BA-2\mm y_2^2=v^2.
\end{equation}
From \eqref{eqn:11},  $y_2^2=v^2-A^2$, so \eqref{eqn:14} yields
\begin{equation}
\label{eqn:15}
B=\frac{v^2+2\mm(v^2-A^2)}{2A}.
\end{equation}
Substituting in \eqref{eqn:12} and solving for $A$ yields
\begin{equation}
\label{eqn:15}
A=\pm\frac{v^2(2\mm+1)}{2\sqrt{v^2\mm^2+v^2\mm+1}}.
\end{equation}
Substituting  $y_2^2=v^2-A^2$ in \eqref{eqn:14} and solving for $B$ yields
\begin{equation}
\label{eqn:16}
B= \frac{v^2}{4A}\frac{(2\mm+1)(v^2\mm+2)}{v^2\mm^2+v^2\mm+1}=\pm\frac{v^2\mm+2}{2\sqrt{v^2\mm^2+v^2\mm+1}}.
\end{equation}
with $\textrm{sign}(A)=\textrm{sign}(B)$.

Substituting $A$, $B$ and $\mm$ in  $x_1=m_2A+m_3B$,  and choosing the negative sign for $x_1$ to agree
with our initial choice that $x_1<0$,  after simplification, we first obtain the value of $x_1$ below.
Then, substituting in $x_2=x_1-A$, $x_3=x_1-B$, $y_2^2=v^2-A^2$, $y_3=-(m_2/m_3)y_2$,
we compute $x_2$, $x_3$, $y_2$, $y_3$, obtaining \eqref{eqn:xy_CC}.

For future reference, we note that if we let $m_3\to 0$ in \eqref{eqn:xy_CC}, we obtain
\begin{equation}
\label{eqn:xy_CC_m3_zero}
\begin{split}
x_1=&-vm_2,\\
y_1=&0,\\
x_2=&v(1-m_2),\\
y_2=&0,\\
x_3=&\frac{v}{2}(1-2m_2),\\
y_3=&\frac{\sqrt{4-v^2}}{2}.
\end{split}
\end{equation}
\end{proof}

\begin{rem} In the case when the oblateness coefficient  $J_2$ of $m_3$ is made equal to zero, then $v=1$, and in \eqref{eqn:xy_CC} we obtain the Lagrangian equilateral triangle central configuration,
with the position given by the following equivalent formulas (see, e.g.,  \cite{Baltagiannis2013}):
\begin{equation} \label{eqn:Baltagiannis}
\begin{split}
x_1 & =\frac{-\lvert K \rvert \sqrt{m_2^2+m_2 m_3+m_3^2}}{K},\\
y_1 &=0,\\
\vspace{3mm}
x_2 & =\frac{\lvert K \rvert [(m_2-m_3)m_3+m_1(2m_2+m_3)]}{2K\sqrt{m_2^2+m_2m_3+m_3^2}},\\
y_2 &=\frac{-\sqrt{3}m_3}{2m_2^{\frac{3}{2}}}\sqrt{\frac{m_2^3}{m_2^2+m_2m_3+m_3^2}},\\
\vspace{3mm}
x_3 & =\frac{\lvert K \rvert }{2\sqrt{m_2^2+m_2m_3+m_3^2}},\\
y_3 &=\frac{\sqrt{3}}{2m_2^{\frac{1}{2}}}\sqrt{\frac{m_2^3}{m_2^2+m_2m_3+m_3^2}},
\end{split}
\end{equation}
where $K=m_2(m_3-m_2)+m_1(m_2+2m_3)$.

Notice that the equations \eqref{eqn:Baltagiannis} are expressed in terms of $m_1,m_2,m_3$, while \eqref{eqn:xy_CC} are expressed in terms of $m_2, m_3$; we obtain corresponding expressions that are equivalent when we substitute $m_1=1-m_2-m_3$ in \eqref{eqn:Baltagiannis}.
One minor difference is that in \eqref{eqn:Baltagiannis} the position of $x_1$ is not constrained to be on the negative $x$-semi-axis, as we assumed for \eqref{eqn:xy_CC}; the position
of $x_1$  in \eqref{eqn:Baltagiannis} depends on the quantity $\textrm{sign}(K)$; when $\textrm{sign}(K)>0$ we have $|K|/K=1$, and  the equations \eqref{eqn:xy_CC} become equivalent with the equation  \eqref{eqn:Baltagiannis}.

We remark that when $m_3\to 0$, the limiting position of the three masses in \eqref{eqn:Baltagiannis} is given by:
\begin{equation} \label{eqn:limiting}
\begin{array}{lll}
x_1  =-m_2, & y_1  = 0, & z_1  =0,\\
x_2  =1-m_2,& y_2 =0,& z_2=0,\\
x_3= \frac{1-2m_2}{2},&y_3=\frac{\sqrt{3}}{2},&z_3=0,
\end{array}
\end{equation}
with $(x_1,y_1)$ and $(x_2,y_2)$ representing  the position of the masses $m_1$ and $m_2$, respectively,  and $(x_3,y_3)$ representing the position of the equilibrium point $L_4$  in the planar circular restricted three-body problem.
\end{rem}


\subsection{Equations of motion for the restricted four-body problem with oblate tertiary}\label{sec:4BP}
Now we consider the dynamics of a  fourth body in the neighborhood of the tertiary. This fourth body  represents the  moonlet  Skamandrios  orbiting around Hektor.  We model the dynamics of the fourth body by the spatial,
circular, restricted four-body problem, meaning that the moonlet is  moving under the gravitational attraction of Hektor, Jupiter and the Sun, without affecting their motion  which remains on circular orbits and forming a triangular central configuration as in Section \ref{sec:central_config}.
As before, we assume that Hektor has an oblate shape, with the gravitational potential given  by \eqref{eqn:C20}.

The equations of motion of the infinitesimal body relative to a synodic frame of reference that rotates together with the three bodies is given by
\begin{equation}\label{eqn:PCR4BP}\begin{split}\ddot{x}-2\omega\dot{y}&=\frac{\partial {\Tilde{\Omega}}}{\partial x}={\Tilde{\Omega}}_x\\
\ddot{y}+2\omega\dot{x}&=\frac{\partial {\Tilde{\Omega}}}{\partial x}={\Tilde{\Omega}}_y\\
\ddot{z}&=\frac{\partial {\Tilde{\Omega}}}{\partial z}={\Tilde{\Omega}}_z,
\end{split}\end{equation}
where the effective potential ${\Tilde{\Omega}}$ is given by
\[{\Tilde{\Omega}}(x,y,z) = \frac{1}{2}\omega^2(x^2+y^2)+\left(\sum_{i=1}^{3}\frac{m_i}{r_i}+ \frac{m_3}{r_3}
\left(\frac{R_3}{r_3}\right)^2\left(\frac{C_{20}}{2}\right)(3\sin^2{\phi}-1)\right)\]
with $(x_i, y_i,z_i)$ representing the $(x,y,z)$-coordinates in the synodic reference frame of the body of mass $m_i$,
$r_{i} = \left((x-x_{i})^2+(y-y_{i})^2+z^2\right)^{\frac{1}{2}}$ is the distance from the moonlet to the mass $m_i$, for $i = 1,2,3$,
and $\omega$ is the angular velocity of the system of three bodies around the center of mass given by \eqref{eqn:omega}.
The perturbed mean motion of the primaries $\omega$ in the above equation depends on the oblateness parameter.
The coordinates $(x_1, y_1)$,  $(x_2, y_2)$, $(x_3, y_3)$ of the bodies  $m_1$, $m_2$, $m_3$, respectively, are given by  \eqref{eqn:xy_CC}, while $z_i=0$, for $i=1,2,3$.

For the choice that we have made $r_{13}=r_{23}=u=1$  and $r_{12}=v$ satisfying \eqref{eqn:uv}, we have
\begin{equation}\label{omeganew}
\omega=\frac{1}{v^3}=\sqrt{1+\frac{3R_3^2J_2}{2}}=\sqrt{1-\frac{3R_3^2C_{20}}{2}}.
\end{equation}

We remark  that if we set $m_2=0$ we obtain the restricted three-body problem with one oblate body and $\omega=\sqrt{1+3R_H^2J_2/2}$ agrees with the formulas in \cite{McCuskey1963,Sharma_Rao_1976,Stoica_Arredondo_2012}.
If $m_3$ has no oblateness, i.e., $J_2=C_{20}=0$, then $\omega=1$.

We rescale the time so that in the new units the angular velocity is normalized to $1$, obtaining
\begin{equation}\label{eqn:eqnmotion}\begin{split}
\ddot{x}-2\dot{y}&=\frac{\partial{\Omega}}{\partial{x}}=\Omega_{x},\\
\ddot{y}+2\dot{x}&=\frac{\partial{\Omega}}{\partial{y}}=\Omega_{y},\\
\ddot{z}&=\frac{\partial{\Omega}}{\partial{z}}=\Omega_{z},
\end{split}\end{equation}
with
$$\Omega(x,y,z)=\frac{1}{2}(x^2+y^2)+\frac{1}{\omega^2}\left(\sum_{i=1}^{3}{ \frac{m_i}{r_i}}+ \frac{m_3}{r_3}
\left(\frac{R_3}{r_3}\right)^2\left(\frac{C_{20}}{2}\right)(3\sin^2{\phi}-1)\right).$$

The equations of motion \eqref{eqn:eqnmotion} have the total energy $H$ defined below as a conserved quantity: \marginpar{MG:trivial derivation omitted}
\begin{equation*}\begin{split}
H=&\frac{1}{2}(\dot{x}^2+\dot{y}^2+\dot{z}^2)-\Omega,
\\=&\frac{1}{2}(\dot{x}^2+\dot{y}^2+\dot{z}^2)\\&-\left[\frac{1}{2}(x^2+y^2)+
\frac{1}{\omega^2}
\left(\sum_{i=1}^{3}{\frac{m_i}{r_i}}+\frac{m_3}{r_3}\left(\frac{R_3}{r_3}\right)^2
\left(\frac{C_{20}}{2}\right)(3\sin^2{\phi}-1)\right)\right].
\end{split}
\end{equation*}

We switch to the Hamiltonian setting by considering the system of symplectic coordinates $(x,y,z,p_x,p_y,p_z)$   with respect to the symplectic form
$\varpi=x\wedge p_x+y\wedge p_y+z\wedge p_z$,
and making the transformation $\dot{x}=p_x+y$, $\dot{y}=p_y-x$ and $\dot{z}=p_z$. We obtain:
\begin{equation} \label{eq2}
\begin{split}
H= & \frac{1}{2} ((p_{x}+y)^2+(p_{y}-x)^2+p_{z}^2)-\frac{1}{2}(x^2+y^2)\\
& -\frac{1}{\omega^2}\left(\sum_{i=1}^{3}\frac{m_i}{r_i}+ \frac{m_3}{r_3}\left(\frac{R_3}{r_3}\right)^2\left(\frac{C_{20}}{2}\right)(3\sin^2{\phi}-1)\right)\\
=& \frac{1}{2}(p_{x}^2+p_{y}^2+p_{z}^2)+yp_{x}-xp_{y}\\&-\frac{1}{\omega^2}\left(\sum_{i=1}^{3}\frac{m_i}{r_i}+
\frac{m_3{R_3}^2}{r_3^3} \left(\frac{C_{20}}{2}\right)(3\sin^2{\phi}-1)\right)\\
=& \frac{1}{2}(p_{x}^2+p_{y}^2+p_{z}^2)+yp_{x}-xp_{y}\\&-\frac{1}{\omega^2}\left(\sum_{i=1}^{3}\frac{m_i}{r_i}+
\frac{m_3}{r_3^3} {C'}(3\sin^2{\phi}-1)\right),
\end{split}
\end{equation}
where we denote $C'={R_3}^2C_{20}/2$.
Thus, the equations of motion \eqref{eqn:PCR4BP}
are equivalent to the Hamilton equations for the Hamiltonian given by \eqref{eq2}.

\section{Hill  four-body problem with oblate tertiary}\label{sec:Hill}
In this section we describe a model which is obtained by taking a Hill's approximation of the spatial, circular,
restricted four-body problem with oblate tertiary.  The procedure goes as follows: we shift the origin of the coordinate system to $m_3$,  perform a rescaling of the coordinates depending
on $m_3^{1/3}$,  write the associated Hamiltonian
in the rescaled coordinates as a power series in $m_3^{1/3}$,  and
neglect  all the terms of order $O(m_3^{1/3})$  in  the expansion,
since such terms are  small when $m_3$ is small.\marginpar{MG:description changed}
 Through this procedure the masses
$m_1$ and $m_2$ are `sent to infinite distance'. This model
is an extension of the classical Hill's approximation of the restricted three-body
problem, with the major differences that our model is a four-body problem, and takes into account the  effect of the oblateness parameter $C_{20}=-J_2$ of the tertiary; compare with \cite{Hill,MEYER1982,Burgos_Gidea}.

\subsection{Hill's approximation of the restricted four-body problem with oblate tertiary  in shifted coordinates} \label{sec:Hill_shifted}


The main result is the following:

\begin{thm} \label{main theorem}
Let us consider the Hamiltonian \eqref{eq2}, let us shift the origin of the reference frame so
that it coincides with $m_3$ and let us perform the conformal symplectic scaling given by
$$(x,y,z,p_{x},p_{y},p_{z})\rightarrow m_{3}^{1/3}(x,y,z,p_{x},p_{y},p_{z}).$$
Accordingly, we rescale the average radius of the tertiary as $R_3=m_3^{1/3}\rho_3$.
Expanding the resulting  Hamiltonian
 as a power series in $m_3^{1/3}$  and
neglecting  all the terms of order $O(m_3^{1/3})$  in  the expansion, we obtain the following Hamiltonian describing the
Hill's four-body problem with oblate tertiary:\marginpar{MG:description changed}
\begin{equation}\begin{split}\label{eqn:hill_hamiltonian}
H&=\frac{1}{2}(p_x^2+p_y^2+p_z^2)+yp_x-xp_y \\
&\quad +\frac{4-3v^2}{8}x^2+ \frac{3v^2-8}{8}y^2+\frac{1}{2}z^2-\frac{3v\sqrt{4-v^2}}{4}(1-2\mu)xy
\\&\quad
-\frac{1}{ (x^2+y^2+z^2)^{\frac{1}{2}}}-\left(\frac{\rho_3^2C_{20}}{2}\right)\frac{1}{ (x^2+y^2+z^2)^{\frac{3}{2}}}
\left( \frac{3z^2}{x^2+y^2+z^2} -1\right),\\
&=\frac{1}{2}(p_x^2+p_y^2+p_z^2)+yp_x-xp_y\\
&\quad+\frac{4-3v^2}{8}x^2+ \frac{3v^2-8}{8}y^2+\frac{1}{2}z^2-\frac{3v\sqrt{4-v^2}}{4}(1-2\mu)xy
\\&\quad
-\frac{1}{ (x^2+y^2+z^2)^{\frac{1}{2}}}-\frac{{\cc } }{ (x^2+y^2+z^2)^{\frac{3}{2}}}
\left( \frac{3z^2}{x^2+y^2+z^2} -1\right),
\end{split}
\end{equation}
where $v=\left(\frac{1}{1-\frac{3}{2}R_3^2C_{20}} \right)^{1/3}$, $\mu=\frac{m_2}{m_1+m_2}$, and  $\cc :=\rho_3^2C_{20}/2=m_3^{-\frac{2}{3}}R^2_3C_{20}/2$.
\end{thm}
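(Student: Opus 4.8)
The plan is to carry out the Hill-type limit explicitly on the Hamiltonian \eqref{eq2}, following the classical scheme of \cite{MEYER1982} and its four-body adaptation \cite{Burgos_Gidea}, but keeping track of the oblateness term. First I would translate the origin to $m_3$: writing $x=x_3+\xi$, $y=y_3+\eta$, $z=\zeta$ with $(x_3,y_3)$ given by \eqref{eqn:xy_CC}, and correspondingly shifting the momenta so that the transformation is symplectic. The monopole and $J_2$ pieces of the potential coming from $m_3$ itself are already centered at $m_3$, so they only need the coordinate renaming; the interesting expansions come from the $m_1$- and $m_2$-contributions $m_1/r_1+m_2/r_2$, whose distances $r_1,r_2$ are $O(1)$ at the central configuration. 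I would expand these two Newtonian terms in a Taylor series in the shifted variables $(\xi,\eta,\zeta)$ about the point $m_3$, discarding the constant term (which does not affect the dynamics) and the linear term (which, together with the linear part of the centrifugal potential $\tfrac12(x^2+y^2)$ and the Coriolis terms, cancels by virtue of the central-configuration equations \eqref{eqn:CC1}); what survives to leading order is the quadratic form in $(\xi,\eta,\zeta)$.

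Next I would apply the conformal symplectic scaling $(x,y,z,p_x,p_y,p_z)\to m_3^{1/3}(x,y,z,p_x,p_y,p_z)$, which multiplies the Hamiltonian by $m_3^{-2/3}$, together with $R_3=m_3^{1/3}\rho_3$. Under this rescaling the self-potential of $m_3$, namely $-\tfrac1{\omega^2}\big(m_3/r_3 + (m_3 R_3^2 C_{20}/2)(3z^2/r_3^2-1)/r_3^3\big)$, becomes, after multiplication by $m_3^{-2/3}$ and cancellation of powers of $m_3$, exactly the terms $-1/(x^2+y^2+z^2)^{1/2}$ and $-c/(x^2+y^2+z^2)^{3/2}(3z^2/(x^2+y^2+z^2)-1)$ with $c=\rho_3^2C_{20}/2$ — here one uses $\omega^2\to 1$ in the limit, consistent with $v=(1-\tfrac32 R_3^2C_{20})^{-1/3}\to 1$, since $R_3^2$ itself is $O(m_3^{2/3})$ times the fixed $\rho_3^2$. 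The quadratic form coming from the expansion of $m_1/r_1+m_2/r_2$ is scale-invariant under the conformal scaling (quadratic terms pick up $m_3^{2/3}$, which is exactly cancelled), so it passes to the limit unchanged; all the higher-order Taylor terms of $m_1/r_1+m_2/r_2$ carry extra positive powers of $m_3^{1/3}$ and are dropped. The kinetic and gyroscopic terms $\tfrac12(p_x^2+p_y^2+p_z^2)+yp_x-xp_y$ are likewise invariant.

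The one genuinely computational step is to show that the surviving quadratic form equals $\frac{4-3v^2}{8}x^2+\frac{3v^2-8}{8}y^2+\frac12 z^2-\frac{3v\sqrt{4-v^2}}{4}(1-2\mu)xy$. For this I would compute the Hessian at $m_3$ of $\Phi(\xi,\eta,\zeta):=\frac1{\omega^2}(m_1/r_1+m_2/r_2)-\tfrac12(\text{shifted centrifugal part})$, using $r_{13}=r_{23}=1$ so that $r_1=r_2=1$ at the base point, and the explicit direction cosines of the vectors from $m_3$ to $m_1$ and to $m_2$ read off from \eqref{eqn:xy_CC}. The standard identity $\partial^2_{jk}(1/r)=(3 n_j n_k-\delta_{jk})/r^3$ evaluated at $r=1$, summed with weights $m_1$ and $m_2$ and combined with the $z$-direction term, together with the contribution $-\tfrac12\omega^2$ times the identity in $x,y$ from the centrifugal potential (and $\omega^2=1/v^3$), yields the coefficients above after using $m_1+m_2\approx 1$ (the $m_3$-correction being pushed into higher order) and $\mu=m_2/(m_1+m_2)$; the factor $\sqrt{4-v^2}$ enters through $y_2,y_3\propto v\sqrt{4-v^2}$. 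I expect the bookkeeping of which terms are $O(1)$ versus $O(m_3^{1/3})$ after the scaling — in particular verifying that the $\omega^2$ prefactor and the $v$-dependence conspire so that the limit is clean and the stated formula for $v$ emerges — to be the main obstacle; the algebra of the Hessian is routine but lengthy, so I would organize it by first recording $r_1,r_2$ and their gradients and Hessians symbolically, then substituting the central-configuration relations \eqref{eqn:omega}–\eqref{eqn:uv} at the end.
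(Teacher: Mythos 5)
Your proposal is correct and follows essentially the same route as the paper's proof: shift the origin to $m_3$, apply the conformal symplectic scaling (with multiplier $m_3^{-2/3}$) together with $R_3=m_3^{1/3}\rho_3$, Taylor-expand the $m_1$- and $m_2$-potentials about $m_3$, and discard all $O(m_3^{1/3})$ terms, with the quadratic form obtained from the Hessian at unit distances using the central-configuration positions. The only nuance is that the linear terms do not cancel exactly via the relative-equilibrium relations — because of the $1/\omega^2$ prefactor and the oblateness they leave a residual of order $m_3^{1/3}$ (as in the paper's computation using the center-of-mass identities and \eqref{omeganew}) — but this is harmless since that residual is dropped anyway under your stated truncation rule.
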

\marginpar{MG: notation $c_{20}$ replaced by $c$ to avoid confusion; Hamiltonian in terms of $c$ added.}
\begin{proof}
We start by shifting the origin of the coordinate system $(x,y,z)$ to the location of the mass $m_3$ (representing Hektor), via the change of coordinates
\[
\begin{array}{lll}
\xi=x-x_3,     & \eta=y-y_3,     & \zeta=z,\\
p_\xi=p_x+y_3, & p_\eta=p_y-x_3, & p_\zeta=p_z .
\end{array}
\]

The Hamiltonian corresponding to \eqref{eq2} becomes
\begin{equation} \label{eq3}
\begin{split}
H =& \frac{1}{2}[(p_{\xi}-y_3)^2+(p_{\eta}+x_3)^2+p_{\zeta}^2]\\&+(\eta+y_3)(p_{\xi}-y_3)-(\xi+x_3)(p_{\eta}+x_3)\\
& -\frac{1}{\omega^2}\left(\sum_{i=1}^{3}\frac{m_i}{\bar{r}_i}+\frac{m_3}{\bar{r}_3}
\left(\frac{R_3}{\bar{r_3}}\right)^2\left(\frac{C_{20}}{2}\right)(3\sin^2{\phi}-1)\right)\\
=& \frac{1}{2} [(p_{\xi}^2-2p_{\xi}y_3+y_3^2)+(p_{\eta}^2+2p_{\eta}x_3+x_3^2)+p_{\zeta}^2]\\
&+\eta p_{\xi}-\eta y_3+y_3p_\xi-y_3^2-\xi p_\eta-\xi x_3-x_3p_\eta-x_3^2\\
&-\frac{1}{\omega^2}\left(\sum_{i=1}^{3}\frac{m_i}{\bar{r}_i}+\frac{m_3}{\bar{r}_3}
\left(\frac{R_3}{\bar{r}_3}\right)^2\left(\frac{C_{20}}{2}\right)(3\sin^2{\phi}-1)\right) \\
=&\frac{1}{2} (p_{\xi}^2+p_{\eta}^2+p_{\zeta}^2)+\eta p_{\xi}-\xi p_\eta-(\xi x_3+\eta y_3)\\
& -\frac{1}{\omega^2}\left(\sum_{i=1}^{3}\frac{m_i}{\bar{r}_i}+
\frac{m_3}{\bar{r}_3}\left(\frac{R_3}{\bar{r}_3}\right)^2\left(\frac{C_{20}}{2}\right)(3\sin^2{\phi}-1)\right)\\&-\frac{1}{2}(x_3^2+y_3^2),
\end{split}
\end{equation}
where $\bar{r}_i^2=(\xi-\bar{x}_i)^2+(\eta-\bar{y}_i)^2+\zeta^2=(\xi+x_3-x_i)^2+(\eta+y_3-y_i)^2+\zeta^2$, with  $\bar{x}_i=x_i-x_3$,
$\bar{y}_i=y_i-y_3$. Note that $\bar{r}_3=r_3$.
Since $-\frac{1}{2}(x_3^2+y_3^2)$ is a constant term, it plays no role in the Hamiltonian equations and it will be dropped in the following calculation.

Since $\sin{\phi}=\frac{z}{\bar{r}_3}$, we have
\begin{equation} \label{*}
\begin{split}
H =&\frac{1}{2} (p_{\xi}^2+p_{\eta}^2+p_{\zeta}^2)+\eta p_{\xi}-\xi p_{\eta}-(\xi x_3+\eta y_3)\\
&-\frac{1}{\omega^2}\left[\sum_{i=1}^{3}\frac{m_i}{\bar{r}_i}+\frac{m_3}{\bar{r}_3}\left(\frac{R_3}{\bar{r}_3}\right)^2
\left(\frac{C_{20}}{2}\right)\left(3\left(\frac{\zeta}{\bar{r}_3}\right)^2-1\right)\right].
\end{split}
\end{equation}
\marginpar{MG:$x$, $y$, $z$ replaced with $\xi$, $\eta$, $\zeta$}

We now perform the following conformal symplectic scaling  with multiplier $m_3^{-2/3}$,\marginpar{MG: the multiplier is the factor that appears in front of the Hamiltonian (3.6), see Meyer and Hall.}
given by (with a little abuse of notation, we call again the new variables $x$, $y$, $z$, $p_x$,
$p_y$, $p_z$):
\begin{equation}
\begin{split}
\xi&=m_3^{\frac{1}{3}} x\ ,\qquad \eta=m_3^{\frac{1}{3}} y\ ,\qquad \zeta=m_3^{\frac{1}{3}}z,\\
p_\xi&=m_3^{\frac{1}{3}} p_x\ ,\qquad p_\eta=m_3^{\frac{1}{3}} p_y\ ,\qquad p_\zeta=m_3^{\frac{1}{3}} p_z\ .
\end{split}
\end{equation}
Consistently with this scale change, we also introduce the scaling transformation of
the average radius of the smallest body
\begin{equation}\label{eqn:J2rescaling}
R_3^2 =(m_3^{1/3}\rho_3)^2 =m_3^{2/3}\rho_3^2.
\end{equation}

The choice of the power of $m_3$ is motivated by the fact that in this way the gravitational force becomes
of the same order of the centrifugal and Coriolis forces (see, e.g., \cite{MEYER1982}).

The purpose of the subsequent calculation is that, after the above substitutions, we expand the resulting  Hamiltonian
 as a power series in $m_3^{1/3}$  and
neglect  all the terms of order $O(m_3^{1/3})$  in  the expansion.\marginpar{MG:sentence changed}

We expand  the terms $\displaystyle\frac{1}{\bar{r}_1}$ and $\displaystyle\frac{1}{\bar{r}_2}$ in Taylor series around the new origin of coordinates,  obtaining
\begin{equation*}\begin{split}f^1:=\frac{1}{\bar{r}_1}=\sum_{k\geq 0}{P_k^1(x,y,z)},\\
f^2:=\frac{1}{\bar{r}_2}=\sum_{k\geq 0}{P_k^2(x,y,z)},\end{split}\end{equation*}
where $P_k^j(x,y,z)$ is a homogeneous polynomial of degree $k$, for $j=1,2$.

The resulting Hamiltonian takes the form:
\begin{equation} \label{eq4}
\begin{split}
H =& m_3^{-\frac{2}{3}}\left[ \frac{1}{2} (m_3^{\frac{2}{3}}p_x^2+m_3^{\frac{2}{3}}p_y^2+m_3^{\frac{2}{3}}p_z^2) \right.
\\&\qquad+m_3^{\frac{2}{3}}yp_x-
m_3^{\frac{2}{3}}xp_y-m_3^{\frac{1}{3}}xx_3-m_3^{\frac{1}{3}}yy_3\\
&\qquad -\frac{1}{\omega^2}\left(\sum_{k\geq 1}{m_1m_3^{\frac{k}{3}}P_k^1(x,y,z)}+\sum_{k \geq 1}{m_2m_3^{\frac{k}{3}}P_k^2(x,y,z)}\right.\\
&\left.\left.\qquad +\frac{m_3^{\frac{2}{3}}}{\bar{r}_3}+\frac{m_3^{\frac{2}{3}}}{\bar{r}_3}\left(\frac{\rho_3}{\bar{r}_3}\right)^2
\left(\frac{C_{20}}{2}\right)\left(3\left(\frac{z}{\bar{r}_3}\right)^2-1\right)\right)\right] \\
=& \frac{1}{2}(p_x^2+p_y^2+p_z^2)+yp_x-xp_y-m_3^{-\frac{1}{3}}xx_3-m_3^{-\frac{1}{3}}yy_3\\
&\quad -\frac{1}{\omega^2}\left (m_3^{-\frac{1}{3}} m_1 P_1^1+m_3^{-\frac{1}{3}} m_2 P_1^2\right. \\
&\qquad\qquad +\sum_{k\geq2}m_3^{\frac{k-2}{3}}m_1P_k^1(x,y,z)+\sum_{k\geq2}m_3^{\frac{k-2}{3}}m_2P_k^2(x,y,z)
\\ &\qquad\qquad\left. +\frac{1}{\bar{r}_3}+\frac{1}{\bar{r}_3}\left(\frac{\rho_3^2}{\bar{r}_3^2}\right)\left(\frac{C_{20}}{2}\right)\left(3\left(\frac{z}{\bar{r}_3}\right)^2-1\right)\right)\\
=& \frac{1}{2}(p_x^2+p_y^2+p_z^2)+yp_x-xp_y\\
&\quad -m_3^{-\frac{1}{3}}\left(xx_3+yy_3+\frac{m_1 P_1^1}{\omega^2} +\frac{m_2 P_1^2 }{\omega^2}\right)\\
&\quad  -\frac{1}{\omega^2}\left(\sum_{k\geq2}{m_3^{\frac{k-2}{3}}m_1P_k^1(x,y,z)}+
\sum_{k\geq2}{m_3^{\frac{k-2}{3}}m_2P_k^2(x,y,z)} \right.\\
 & \qquad\qquad\left . +\frac{1}{\bar{r}_3} +\frac{\rho_3^2}{\bar{r}_3^3}\left(\frac{C_{20}}{2}\right) \left(3\left(\frac{z}{\bar{r}_3}\right)^2-1\right)\right).
\end{split}
\end{equation}
In the following, we will disregard in \eqref{eq4} all terms which are of order of $m_3^{1/3}$, as in  classical Hill's theory of lunar motion (\cite{MEYER1982}).\marginpar{MG: sentence changed}

We compute the first-degree polynomials $P^i_1$, for $i=1,2$,
\begin{equation}\label{eqn:hill-0} \begin{split}P^i_1&=\frac{\partial f^i}{\partial x}(0,0,0)x
+\frac{\partial f^i}{\partial y}(0,0,0)y+\frac{\partial f^i}{\partial z}(0,0,0)z=\frac{\bar{x}_i}{r_{i3}^3}x+\frac{\bar{y}_i}{r_{i3}^3}y\\&=(x_i-x_3)x+(y_i-y_3)y,
\end{split}
\end{equation}
where $r_{i3}=\sqrt{(x_i-x_3)^2+(y_i-y_3)^2}=u=1$ represents the distance between the mass $m_i$ and $m_3$.

We now compute the contribution of the different terms in \eqref{eq4}.
Using that  $m_1+m_2+m_3=1$, and that $m_1x_1+m_2x_2+m_3x_3=m_1y_1+m_2y_2+m_3y_3=0$,
from \eqref{eqn:4}, \eqref{eqn:5}, \eqref{eqn:6}, we obtain that
\begin{equation}
\label{eqn:poly1}\begin{split}
m_3^{-\frac{1}{3}}&(xx_3+yy_3+ \frac{m_1 P_1^1}{\omega^2} +\frac{m_2 P_1^2}{\omega^2})\\
&=m_3^{-\frac{1}{3}}\left.[x(x_3+\frac{m_1}{\omega^2}(x_1-x_3) + \frac{m_2}{\omega^2}(x_2-x_3))\right.\\
& \left.\qquad\quad+y(y_3+\frac{m_1}{\omega^2}(y_1-y_3) + \frac{m_2}{\omega^2}(y_2-y_3))\right]\\
&=m_3^{-\frac{1}{3}}\left[x(x_3+\frac{1}{\omega^2}(m_1x_1+m_2x_2+m_3x_3- x_3))\right.\\
&\,\left.\qquad\quad+y(y_3+\frac{1}{\omega^2}(m_1y_1+m_2y_2+m_3y_3- y_3))\right]\\
&=m_3^{-\frac{1}{3}}\left(1-\frac{1}{\omega^2}\right)(xx_3+yy_3)\\
&=-m_3^{-\frac{1}{3}}\frac{\frac{3}{2}m_3^{\frac{2}{3}}\rho_3^2C_{20}}{1-\frac{3}{2}m_3^{\frac{2}{3}}\rho_3^2C_{20}}
(xx_3+yy_3)\\
&= -m_3^{\frac{1}{3}}\frac{\frac{3}{2}\rho_3^2C_{20}}{1-\frac{3}{2}m_3^{\frac{2}{3}}\rho_3^2C_{20}}(xx_3+yy_3),
\end{split}
\end{equation}
where we used \eqref{omeganew} and \eqref{eqn:J2rescaling}.
The expression in \eqref{eqn:poly1} is $O(m_3^{1/3})$ so it will be omitted in the Hill approximation.\marginpar{MG:sentence changed}

We compute the second degree polynomials $P^i_2$, for $i=1,2$,
\begin{equation}
\label{eqn:hill-3}\begin{split}
P^i_2&=\frac{1}{2}\left(\frac{\partial^2 f^i}{\partial x^2}(0,0,0)x^2+\frac{\partial^2 f^i}{\partial y^2}(0,0,0)y^2
+\frac{\partial^2 f^i}{\partial z^2}(0,0,0)z^2\right)\\
&\quad+\left(\frac{\partial^2 f^i}{\partial x\partial y}(0,0,0)xy+\frac{\partial^2 f^i}{\partial y\partial z}(0,0,0)yz
+\frac{\partial^2 f^i}{\partial z\partial x}(0,0,0)zx\right)\\&=\frac{1}{2}\left (\frac{3\bar{x}^2_i}{r_{i3}^5}-\frac{1}{r_{i3}^3}\right)x^2+\frac{1}{2}\left (\frac{3\bar{y}^2_i}{r_{i3}^5}-\frac{1}{r_{i3}^3}\right)y^2+\frac{1}{2}\left (-\frac{1}{r_{i3}^3}\right)z^2\\&\,+\left(\frac{3\bar{x}_i\bar{y}_i}{r_{i3}^5}\right)xy\\
&=\frac{1}{2}\left ( {3\bar{x}^2_i} -1\right)x^2+\frac{1}{2}\left ( {3\bar{y}^2_i} -1\right)y^2-\frac{1}{2}z^2+\left({3\bar{x}_i\bar{y}_i}\right) xy,
\end{split}
\end{equation}
since $r_{13}=r_{23}=u=1$.

The corresponding terms in \eqref{eq4} yield
\begin{equation}\label{eqn:poly2}\begin{split}
\frac{1}{\omega^2}&(m_1P^1_2+m_2P^2_2)\\
&=\frac{1}{\omega^2}  \Big[\frac{1}{2}\left((1-m_2-m_3)(3(x_1-x_3)^2-1) +m_2(3(x_2-x_3)^2-1)\right)x^2\\
&\qquad +\frac{1}{2}\left((1-m_2-m_3)(3(y_1-y_3)^2-1) +m_2(3(y_2-y_3)^2-1)\right)y^2\\
&\qquad + \frac{1}{2}\left(-1+m_3\right)z^2\\
&\qquad + 3\left((1-m_2-m_3)(x_1-x_3)(y_1-y_3)+m_2(x_2-x_3)(y_2-y_3)\right)xy\Big].
\end{split}
\end{equation}
Using \eqref{eqn:xy_CC_m3_zero} and that
\[\frac{1}{\omega^2}=\frac{1}{1+3C}=\frac{1}{1-\frac{3}{2}m_3^{\frac{2}{3}}\rho_3^2C_{20}}=1+O(m_3^{1/3}),\]
omitting the terms of order $O(m_3^{1/3})$, \marginpar{MG: sentence changed}
the expression \eqref{eqn:poly2} becomes
\[ \frac{3v^2-4}{8}x^2+ \frac{8-3v^2}{8}y^2-\frac{1}{2}z^2+\frac{3v\sqrt{4-v^2}}{4}(1-2m_2)xy.\]
The expression in  \eqref{eq4}  contains terms of the form
\[\sum_{k\geq 3}m_3^{\frac{k-2}{3}}m_1P_k^1(x,y,z)+\sum_{k\geq 3}m_3^{\frac{k-2}{3}}m_2P_k^2(x,y,z),\]
which can be written in terms of positive powers of $m_3^{1/3}$, which are omitted in the Hill approximation. \marginpar{MG: sentence changed}

The remaining terms in \eqref{eq4} are
\[\frac{1}{\bar{r}_3}
+\left(\frac{\rho_3^2}{\bar{r}_3^3}\right)\left(\frac{C_{20}}{2}\right)
\left(3\left(\frac{z}{\bar{r}_3}\right)^2-1\right) \]
and they do not depend on $m_3$.

Therefore, when we omit all terms of order $O(m_3^{1/3})$ in \eqref{eq4}, and taking into account that $\frac{1}{\omega^2}=1+O(m_3^{1/3})$, we obtain the following  Hamiltonian\marginpar{MG:sentence changed}
\begin{equation}\label{eqn:Hill-Ham}\begin{split}
H&=\frac{1}{2}(p_x^2+p_y^2+p_z^2)+yp_x-xp_y \\
&\quad -\frac{3v^2-4}{8}x^2- \frac{8-3v^2}{8}y^2+\frac{1}{2}z^2-\frac{3v\sqrt{4-v^2}}{4}(1-2m_2)xy
\\&\quad
-\frac{1}{ (x^2+y^2+z^2)^{\frac{1}{2}}}-\frac{\rho_3^2}{ (x^2+y^2+z^2)^{\frac{3}{2}}}\left(\frac{C_{20}}{2}\right)
\left( \frac{3z^2}{x^2+y^2+z^2} -1\right)\\
&=\frac{1}{2}(p_x^2+p_y^2+p_z^2)+yp_x-xp_y\\
&\quad -\frac{3v^2-4}{8}x^2- \frac{8-3v^2}{8}y^2+\frac{1}{2}z^2-\frac{3v\sqrt{4-v^2}}{4}(1-2\mu)xy
\\&\quad
-\frac{1}{ (x^2+y^2+z^2)^{\frac{1}{2}}}-\frac{{\cc } }{ (x^2+y^2+z^2)^{\frac{3}{2}}}
\left( \frac{3z^2}{x^2+y^2+z^2} -1\right),
\end{split}
\end{equation}
where we used $\mu=m_2/(m_1+m_2)$ and  $\cc :=\rho_3^2C_{20}/2=m_3^{-\frac{2}{3}}R^2_3C_{20}/2$.
\end{proof}

We remark that a similar strategy was adopted in \cite{Markellos}, where a Hill's three body problem with oblate
primaries has been considered.

We refer to the Hamiltonian in \eqref{eqn:Hill-Ham} as the \sl Hill's approximation. \rm
It can be thought of as the limiting Hamiltonian, when the primary and the secondary are sent at an infinite distance, and their total mass becomes infinite.
It provides an approximation of the motion of the infinitesimal particle in an $O(m_3^{1/3})$
neighborhood of $m_3$.  Remarkably, the angular velocity $\omega$ does not appear in the limiting Hamiltonian.

We introduce the gravitational potential as
\begin{equation}
\begin{split}
\widehat U(x,y,z)&=
\frac{3v^2-4}{8}x^2+ \frac{8-3v^2}{8}y^2-\frac{1}{2}z^2+\frac{3v\sqrt{4-v^2}}{4}(1-2\mu)xy
\\&\quad
+\frac{1}{ (x^2+y^2+z^2)^{\frac{1}{2}}}+\frac{{\cc }}{ (x^2+y^2+z^2)^{\frac{3}{2}}}
\left( \frac{3z^2}{x^2+y^2+z^2} -1\right),
\end{split}
\end{equation}
and the effective potential as
\begin{equation}
\begin{split}
\widehat\Omega(x,y,z) &=\frac{1}{2}(x^2+y^2)+\widehat U(x,y,z) \\
&=
\frac{3v^2}{8}x^2+ \frac{3(4-v^2)}{8}y^2-\frac{1}{2}z^2+\frac{3v\sqrt{4-v^2}}{4}(1-2\mu)xy\\&\quad
+\frac{1}{ (x^2+y^2+z^2)^{\frac{1}{2}}}+\frac{\cc }{ (x^2+y^2+z^2)^{\frac{3}{2}}}
\left( \frac{3z^2}{x^2+y^2+z^2} -1\right).
\end{split}
\end{equation}
The equations of motion associated to \eqref{eqn:Hill-Ham} can thus be written as:
\begin{equation*}\begin{split}\ddot{x}-2\dot{y}&=\widehat\Omega_x,\\
\ddot{y}+2\dot{x}&=\widehat\Omega_y,\\
\ddot{z}&=\widehat\Omega_z.\end{split}\end{equation*}

\begin{rem}\label{rem:L4L5}
In the case when $C_{20}=0$, we have that $v=1$ and  the Hamiltonian in \eqref{eqn:Hill-Ham} is the same as the one obtained in \cite{Burgos_Gidea}. Also, its quadratic part  coincides with the quadratic part of the expansion of the Hamiltonian of the restricted three-body problem centered at the Lagrange libration point $L_{4}$.
Moreover, in the special case $\mu=0$  we obtain  the classical lunar Hill's problem after some coordinate transformation
(see Section~\ref{sec:Hill_system}).
\end{rem}

\subsection{Hill's four-body model applied to the Sun-Jupiter-Hektor system}\label{sec:Hill_system} In the case of the Sun-Jupiter-Hektor system, using the data from Section \ref{section:data} in the above equations we obtain $\mu=m_2/(m_1+m_2)=1.898\times10^{27}/(1.989\times10^{30}+1.898\times10^{27})=0.0009533386$.
Also, for Hektor we have $C_{20}=-0.476775$, the average radius of Hektor is $R_3=92$ km, and the mass of Hektor is $m_H=7.91\times10^{18}$ kg. In the normalized units, where we use the average distance Sun-Jupiter $778.5\times 10^6$ km as the unit of distance, and the mass of Sun-Jupiter-Hektor $7.91\times10^{18}+1.989\times10^{30}+1.898\times10^{27}=1.990898\times 10^{30}$ kg
as the unit of mass, we have that the normalized average radius of Hektor is  $R_3=92/(778.5\times{10}^{6})=1.18176\times 10^{-7}$
and  the normalized mass of Hektor  is $m_3=7.91\times10^{18}/1.990898\times 10^{30}=3.97308\times10^{-12}$.
Hence, we obtain
\begin{equation}\label{eqn:normalized_\cc }
\cc =m_3^{-\frac{2}{3}}R^2_3C_{20}/2=-1.32716\times 10^{-7}.
\end{equation}
Also, $\rho_3= m_3^{-\frac{1}{3}}R_3= 0.000746$.

We note that if we consider the restricted four-body problem (without the Hill's approximation) described
by the Hamiltonian \eqref{eq2}, the oblateness effect is given by the coefficient
$C'=R_3^2C_{20}/2=-3.32921544\times 10^{-15}$, which is much smaller then
$\cc $ in \eqref{eqn:normalized_\cc }. As expected, the Hill's approximation is acting like a `magnifying glass' of the dynamics in a neighborhood of Hektor.

\subsection{Hill's approximation of the restricted four-body problem with oblate tertiary in rotated coordinates}
In this section we write the Hamiltonian of Hill's approximation of the
restricted four-body problem with oblate tertiary in a rotating reference frame
in which the primary and the secondary will be located on the horizontal axis.

\begin{cor}\label{cor:Hill}
The Hamiltonian   \eqref{eqn:hill_hamiltonian} is equivalent, via a rotation of the coordinate axes that
places the primary and the secondary on the $x$-axes, to the Hamiltonian
\begin{equation}\label{eqn:hill_rotated}\begin{split}
H&=\frac{1}{2}(p_x^2+p_y^2+p_z^2)+y p_x-xp_y\\
&+\left(\frac{1-\lambda_2}{2}\right)x^2+\left(\frac{1-\lambda_1}{2}\right)y^2+\frac{1}{2}z^2\\
&-\frac{1}{\sqrt{x^2+y^2+z^2}}-\left(\frac{\rho_3^2C_{20}}{2}\right)
\frac{1}{(x^2+y^2+z^2)^{\frac{3}{2}}}\left (\frac{3z^2}{x^2+y^2+z^2} -1\right),
\end{split}\end{equation}
where $\lambda_2$ and $\lambda_1$ are the eigenvalues corresponding to the rotation transformation
in the $xy$-plane, and $\rho_3=m_3^{-1/3}R_3$.
\end{cor}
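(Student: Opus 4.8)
The plan is to use the fact that a rotation of the spatial axes acting in the $(x,y)$-plane, lifted canonically to the conjugate momenta $(p_x,p_y)$ and leaving $(z,p_z)$ fixed, is a canonical transformation under which every term of \eqref{eqn:hill_hamiltonian} is invariant \emph{except} the quadratic form in $(x,y)$; the corollary then reduces to diagonalizing a single $2\times2$ symmetric matrix. Let $R(\theta)\in SO(2)$ act on $(x,y)$ and, by cotangent lift, on $(p_x,p_y)$ by the same matrix. I would verify term by term that this fixes the kinetic term $\tfrac12(p_x^2+p_y^2+p_z^2)$ (since $R(\theta)$ preserves the Euclidean norm), the term $yp_x-xp_y=-(xp_y-yp_x)$ (which is minus the angular momentum $L_z$, invariant under rotations — a one-line check with $R(\theta)$), the term $\tfrac12z^2$ (untouched), and the two potential terms $-(x^2+y^2+z^2)^{-1/2}$ and $-\cc\,(x^2+y^2+z^2)^{-3/2}\big(3z^2/(x^2+y^2+z^2)-1\big)$ (which depend only on the rotation-invariant quantities $r^2=x^2+y^2+z^2$ and $z$). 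Hence the only part of \eqref{eqn:hill_hamiltonian} modified by $R(\theta)$ is
\[
Q(x,y)=\frac{4-3v^2}{8}x^2+\frac{3v^2-8}{8}y^2-\frac{3v\sqrt{4-v^2}}{4}(1-2\mu)\,xy.
\]

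Next I would diagonalize $Q$. Let $A=\left(\begin{smallmatrix}\frac{4-3v^2}{8} & -\frac{3v\sqrt{4-v^2}}{8}(1-2\mu)\\ -\frac{3v\sqrt{4-v^2}}{8}(1-2\mu) & \frac{3v^2-8}{8}\end{smallmatrix}\right)$ be the symmetric matrix with $Q(x,y)=(x,y)\,A\,(x,y)^{T}$. By the spectral theorem there is an angle $\theta^{*}$, characterized by $\tan(2\theta^{*})=\dfrac{-v\sqrt{4-v^2}\,(1-2\mu)}{2-v^2}$, with $R(\theta^{*})^{T}AR(\theta^{*})$ diagonal; after this rotation $Q$ becomes $\alpha_1x^2+\alpha_2y^2$ with $\alpha_1,\alpha_2$ the eigenvalues of $A$. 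Writing $\alpha_1=\tfrac{1-\lambda_2}{2}$, $\alpha_2=\tfrac{1-\lambda_1}{2}$ defines $\lambda_1,\lambda_2$ to be the eigenvalues of $\mathrm{Id}-2A=\tfrac34\left(\begin{smallmatrix}v^2 & v\sqrt{4-v^2}(1-2\mu)\\ v\sqrt{4-v^2}(1-2\mu) & 4-v^2\end{smallmatrix}\right)$; a quick computation gives $\lambda_1+\lambda_2=3$ and $\lambda_1\lambda_2=\tfrac94v^2(4-v^2)\mu(1-\mu)$, which for $v=1$ are the eigenvalues appearing in the classical linear analysis of $L_4$ (consistent with Remark~\ref{rem:L4L5}). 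Substituting the diagonalized $Q$ back into \eqref{eqn:hill_hamiltonian} and keeping the invariant terms from the first step produces exactly \eqref{eqn:hill_rotated}, with $\cc=\rho_3^2C_{20}/2$ and $\rho_3=m_3^{-1/3}R_3$ as in Theorem~\ref{main theorem}.

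It then remains to confirm the geometric description in the statement, namely that $R(\theta^{*})$ is the rotation putting the primary and secondary on the $x$-axis. For this I would use the limiting positions of $m_1$ and $m_2$ relative to $m_3$ obtained from \eqref{eqn:xy_CC_m3_zero} after the shift performed in the proof of Theorem~\ref{main theorem}, and check that the principal axes of $A$ are the ones realizing that placement; this is coherent with the non-oblate case $v=1$ treated in \cite{Burgos_Gidea} and, for $\mu=0$, with the classical lunar Hill problem. There is no genuine analytic obstacle here beyond the spectral theorem; the points that require care are purely bookkeeping — transforming the momenta by the \emph{same} orthogonal matrix as the positions so the symplectic form is preserved, and fixing the sign and index conventions for $\lambda_1,\lambda_2$ so each eigenvalue attaches to the correct variable in \eqref{eqn:hill_rotated} and $R(\theta^{*})$ indeed matches the stated normalization.
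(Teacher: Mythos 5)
Your proposal is correct and follows essentially the same route as the paper: both diagonalize the $(x,y)$-quadratic part of \eqref{eqn:hill_hamiltonian} by a planar rotation, and your matrix $\mathrm{Id}-2A$ is precisely the paper's matrix $M$, so your eigenvalues (with $\lambda_1+\lambda_2=3$ and $\lambda_1\lambda_2=\tfrac{9}{4}v^2(4-v^2)\mu(1-\mu)$) coincide with those in \eqref{eq18}, and the attachment $\tfrac{1-\lambda_2}{2}x^2+\tfrac{1-\lambda_1}{2}y^2$ reproduces \eqref{eqn:hill_rotated}. The only presentational difference is that you argue invariance of the remaining terms at the Hamiltonian level via the cotangent lift of $R(\theta^*)\in SO(2)$ and the rotational invariance of $yp_x-xp_y$, whereas the paper transforms the equations of motion with the explicit eigenvector matrix $C$ and verifies symplecticity by checking $C^{T}\mathcal{J}C=\mathcal{J}$ (equivalently $\det C=1$) — the same orientation issue your choice of a genuine rotation handles implicitly.
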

\begin{proof}
We perform a  rotation on the $xy-$plane and re-write the Hamiltonian in \eqref{eqn:Hill-Ham}
in the framework of the rotating coordinates, which are more suitable for the subsequent analysis.
Since the rotation will be performed on the plane, we restrict the computations to the planar case. The planar effective potential restricted to the $xy-$plane is given by
\begin{equation*}\begin{split}\widehat\Omega (x,y)=&\frac{3v^2}{8}x^2+\frac{3v\sqrt{4-v^2}}{4}(1-2\mu)xy+\frac{3(4-v^2)}{8}y^2\\
&+\frac{1}{(x^2+y^2)^{1/2}}-  \frac{\cc }{(x^2+y^2)^{3/2}},\end{split}\end{equation*}
which can be written in matrix rotation as
$$\widehat\Omega = \frac{1}{2}w^TMw+\frac{1}{\lVert w \rVert}-  \frac{\cc }{ \lVert w \rVert^3},$$
where $w=(x,y)^T$ and
\[ M=
  \left[ {\begin{array}{ll}
   \frac{3v^2}{4} & \frac{3v\sqrt{4-v^2}}{4}(1-2\mu) \\[0.5em]
     \frac{3v\sqrt{4-v^2}}{4}(1-2\mu)  & \frac{3(4-v^2)}{4} \end{array} } \right].
\]

Notice that the matrix $M$ is symmetric, so its eigenvalues are real, the eigenvectors $v_1$ and $v_2$ are orthogonal, and the corresponding orthogonal matrix $C=\textrm{col}(v_2,v_1)$
defines a rotation in the $xy$-plane.
We find the eigenvalues of $M$ by solving the characteristic equation:
\begin{equation} \label{eq18}
\begin{split}
\det( M-\lambda I )=0 \Rightarrow& \lambda^2-3\lambda +\frac{9v^2(4-v^2)}{4}(\mu-\mu^2)=0,\\
\Rightarrow& \lambda_{1}=\frac{3- 3\sqrt{1-v^2(4-v^2)(\mu-\mu^2)}}{2},\\& \lambda_{2}=\frac{3+ 3\sqrt{1-v^2(4-v^2)(\mu-\mu^2)}}{2}.
\end{split}
\end{equation}
Since $0<\mu\leq \frac{1}{2}$, $\mu-\mu^2\leq 1/4$, and we have $1>1-v^2(4-v^2)(\mu-\mu^2)\geq 1-\frac{1}{4}v^2(4-v^2)=\left(1-\frac{v^2}{2}\right)^2> 0$. Thus $\lambda_1,\lambda_2>0$ and $\lambda_1\neq\lambda_2$.

We notice that when $\mu=\frac{1}{2}$, the matrix $M$ is already a diagonal matrix, and the corresponding
eigenvalues are $\lambda_1=\frac{3v^2}{4} $ and $\lambda_2=\frac{3(4-v^2)}{4}$.
Therefore, below we consider the case $\mu\neq\frac{1}{2}$ for which we proceed to compute
the eigenvectors associated to $\lambda_1$ and $\lambda_2$.

The eigenvector $v_1$ such that $Mv_1=\lambda_1v_1$ and $\lVert v_1 \rVert=1$ is given by
\[
   v_1=
  \left[ {\begin{array}{c}
   \displaystyle\frac{\frac{3(4-v^2)}{4} -\lambda_1}{\Delta_1}  \\[0.75em]
      \displaystyle-\frac{\frac{3v\sqrt{4-v^2}}{4}(1-2\mu)}{\Delta_1}     \end{array} } \right],
\]
where \[\Delta_1 = \left[\frac{9v^2(4-v^2)}{8}(1-2\mu)^2+\frac{3(v^2-2)}{2}\left(\lambda_1-\frac{3(4-v^2)}{4}\right)\right]^{1/2}.\]

Similarly, the eigenvector $v_2$ such that $Mv_2=\lambda_2v_2$ and $\lVert v_2 \rVert=1$ is given by
\[
   v_2=
  \left[ {\begin{array}{c}
   \displaystyle\frac{\frac{3(4-v^2)}{4} -\lambda_2}{\Delta_2}  \\[0.75em]
      \displaystyle-\frac{\frac{3v\sqrt{4-v^2}}{4}(1-2\mu)}{\Delta_2}     \end{array} } \right],
\]
where \[\Delta_2 = \left[\frac{9v^2(4-v^2)}{8}(1-2\mu)^2+\frac{3(v^2-2)}{2}\left(\lambda_2-\frac{3(4-v^2)}{4}\right)\right]^{1/2}.\]

The equations of motion for the planar case can be written as \\
$$\ddot{w}-2\mathcal{J}\dot{w}=Mw-\frac{w}{\lVert w \rVert^3}+\frac{{3\cc }w}{\lVert w \rVert^5},$$
where
\[
   \mathcal{J}=
  \left[ {\begin{array}{cc}
   0 & 1 \\      -1 & 0 \      \end{array} } \right].
\]
Consider the linear change of variable $w=C\bar{w}$ with $\bar{w} = (\bar{x},\bar{y})^T$. By substituting the new variable and multiplying $C^{-1}$ from the left, we obtain\\
$$C^{-1}C\ddot{\bar{w}}-2C^{-1}\mathcal{J} C\dot{\bar{w}}=C^{-1}MC\bar{w}-\frac{C^{-1}C\bar{w}}{\lVert \bar{w} \rVert^3}+3\cc \frac{C^{-1}C\bar{w}}{\lVert \bar{w} \rVert^5}.$$
Notice that $D=C^{-1}MC$ is the diagonal  matrix  $D=\textrm{diag}(\lambda_2,\lambda_1)$, that is $\lVert C \bar{w} \rVert^3=\lVert \bar{w} \rVert^3$. Therefore the equation becomes \\
$$\ddot{\bar{w}}-2C^{-1}\mathcal{J}C\dot{\bar{w}}=D\bar{w} -\frac{\bar{w}}{\lVert \bar{w} \rVert^3}+\frac{3\cc \bar{w}}
{\lVert \bar{w} \rVert^5}.$$
Recall that $v_1=(v_{11},v_{12})^T$, $v_2=(v_{21},v_{22})^T$ and $C=\textrm{col}(v_2,v_1)$. Since $C$ is unitary,
we have $C^{-1}=C^{T}$ and moreover
\[
   C^{-1}\mathcal{J} C=
  \left[ {\begin{array}{cc}
   0 & v_{12}v_{21}-v_{11}v_{22} \\
   -(v_{12}v_{21}-v_{11}v_{22}) & 0 \      \end{array} } \right].
\]
A direct computation shows that $v_{12}v_{21}-v_{11}v_{22}=1$, which implies
$C^{-1}\mathcal{J} C=\mathcal{J}$. Since $C^{-1}\mathcal{J}C=C^T\mathcal{J}C=\mathcal{J}$, the matrix $C$ is symplectic by definition. Therefore, the change of coordinates is symplectic.
Thus, the equations of motion can be written as \\
$$\ddot{\bar{w}}-2\mathcal{J}\dot{\bar{w}}=D\bar{w}-\frac{\bar{w}}{\lVert \bar{w} \rVert^3}+
\frac{3\cc \bar{w}}{\lVert \bar{w} \rVert^5}.$$
For $\mu \in [0,\frac{1}{2})$, we obtain the equations
\begin{equation} \label{eq19}
\begin{split}
&\ddot{\bar{x}}-2\dot{\bar{y}}=\bar{\Omega}_{\bar{x}}\\
&\ddot{\bar{y}}+2\dot{\bar{x}}=\bar{\Omega}_{\bar{y}}
\end{split}
\end{equation}
with $$\bar{\Omega}(\bar{x},\bar{y}) = \frac{1}{2}(\lambda_2\bar{x}^2+\lambda_1\bar{y}^2)+\frac{1}{\lVert \bar{w} \rVert}-
\frac{\cc }{\lVert \bar{w} \rVert^3}.$$
From the expressions for $\bar{\Omega}_{\bar{x}}$ and $\bar{\Omega}_{\bar{y}}$, we notice the symmetry properties:
$$\bar{\Omega}_{\bar{x}}(\bar{x},-\bar{y})=\bar{\Omega}_{\bar{x}}(\bar{x},\bar{y})\ ,\qquad
\bar{\Omega}_{\bar{y}}(\bar{x},-\bar{y})=-\bar{\Omega}_{\bar{x}}(\bar{x},\bar{y}).$$
Using these properties, we see that the equations (\ref{eq19}) are invariant under the transformations
$\bar{x}\rightarrow \bar{x}$, $\bar{y} \rightarrow -\bar{y}$, $\dot{\bar x}\rightarrow -\dot{\bar x}$, $\dot{\bar y} \rightarrow \dot{\bar y}$,
$\ddot{\bar x} \rightarrow \ddot{\bar x}$ and $\ddot{\bar y}\rightarrow -\ddot{\bar y}$.

If we now go back to the spatial problem, we need to replace $\bar\Omega$ by
\begin{equation}\label{eqn:eff_poten_rot}\begin{split}\bar{\Omega}(\bar{x},\bar{y},\bar{z})=
&\frac{1}{2}(\lambda_2\bar{x}^2+\lambda_1\bar{y}^2-\bar{z}^2)\\
&+\frac{1}{(\bar{x}^2+\bar{y}^2+\bar{z}^2)^{\frac{1}{2}}}-
\frac{\cc }{(\bar{x}^2+\bar{y}^2+\bar{z}^2)^{\frac{3}{2}}}
+\frac{3\cc \bar{z}^2}{(\bar{x}^2+\bar{y}^2+\bar{z}^2)^{\frac{5}{2}}};
\end{split}\end{equation}
writing
$\bar{\Omega}(\bar{x},\bar{y},\bar{z})=\frac{1}{2}\bar{x}^2+\frac{1}{2}\bar{y}^2+\bar U(\bar{x},\bar{y},\bar{z})$, we can define $\bar U$ as
\begin{equation}\label{eqn:grav_poten_rot}
\begin{split}
\bar{U}(\bar{x},\bar{y},\bar{z})=&\bar{\Omega}(\bar{x},\bar{y},\bar{z})-\frac{1}{2}\bar{x}^2-\frac{1}{2}\bar{y}^2\\
=&\left(\frac{\lambda_2-1}{2}\right)\bar{x}^2+\left(\frac{\lambda_1-1}{2}\right)\bar{y}^2-\frac{1}{2}\bar{z}^2\\
&+\frac{1}{(\bar{x}^2+\bar{y}^2+\bar{z}^2)^{\frac{1}{2}}}  -
\frac{\cc }{ (\bar{x}^2+\bar{y}^2+\bar{z}^2)^{\frac{3}{2}}}
+\frac{3\cc \bar{z}^2}{(\bar{x}^2+\bar{y}^2+\bar{z}^2)^{\frac{5}{2}}}.
\end{split}
\end{equation}
In conclusion, the Hamiltonian in these new coordinates is given by the following expression
(we omit the bars for $x$ and $y$ to simplify the notation):
\begin{equation*}
\begin{split}
H(x,y,z,p_x,p_y,p_z) &=\frac{1}{2}(p_x^2+p_y^2+p_z^2)+y p_x-xp_y\\
&+\left(\frac{1-\lambda_2}{2}\right)x^2+\left(\frac{1-\lambda_1}{2}\right)y^2+\frac{1}{2}z^2\\
&-\frac{1}{(x^2+y^2+z^2)^{\frac{1}{2}}}+\frac{\cc }{(x^2+y^2+z^2)^{\frac{3}{2}}}
-\frac{3\cc z^2}{(x^2+y^2+z^2)^{\frac{5}{2}}},
\end{split}
\end{equation*}
which coincides with \eqref{eqn:hill_rotated}.
\end{proof}

\begin{rem}\label{rem:L4L5}
If we let $C_{20}=0$ and $\mu=0$  in \eqref{eqn:hill_rotated}, we obtain  the Hamiltonian for the classical lunar Hill problem, see, e.g., \cite{MEYER1982}.
\end{rem}

\section{Linear stability analysis of the Hill  four-body problem with oblate
tertiary}\label{sec:linear}
In this section we determine the
equilibrium points associated to the potential in
\eqref{eqn:eff_poten_rot} and we analyze their linear stability.

\subsection{The equilibrium points of the system}
Considering the  potential \eqref{eqn:eff_poten_rot} (again we omit the bars for a simplified notation),
\begin{equation*}\begin{split}\Omega=&\frac{1}{2}(\lambda_2x^2+\lambda_1y^2-z^2)
\\&+\frac{1}{(x^2+y^2+z^2)^{\frac{1}{2}}}
-\frac{\cc }{(x^2+y^2+z^2)^{\frac{3}{2}}}
+\frac{3\cc z^2}{(x^2+y^2+z^2)^{\frac{5}{2}}},
\end{split}\end{equation*}
we compute its derivatives as
\begin{equation}
\begin{split}
\Omega_x &=\lambda_2x-\frac{x}{(x^2+y^2+z^2)^{\frac{3}{2}}}+\frac{3\cc x}{(x^2+y^2+z^2)^{\frac{5}{2}}}
-\frac{15\cc z^2x}{(x^2+y^2+z^2)^{\frac{7}{2}}}\\
& =\lambda_2x-\frac{x}{r^3}+\frac{3\cc x}{r^5}-\frac{15\cc z^2x}{r^7}\\
\Omega_y &=\lambda_1y-\frac{y}{(x^2+y^2+z^2)^{\frac{3}{2}}}+\frac{3\cc y}{(x^2+y^2+z^2)^{\frac{5}{2}}}
-\frac{15\cc z^2y}{(x^2+y^2+z^2)^{\frac{7}{2}}}\\
& =\lambda_1y-\frac{y}{r^3}+\frac{3\cc y}{r^5}-\frac{15\cc z^2y}{r^7}\\
\Omega_z &=-z-\frac{z}{(x^2+y^2+z^2)^{\frac{3}{2}}}+\frac{9\cc z}{(x^2+y^2+z^2)^{\frac{5}{2}}}
-\frac{15\cc z^3}{(x^2+y^2+z^2)^{\frac{7}{2}}}\\
& =-z-\frac{z}{r^3}+\frac{9\cc z}{r^5}-\frac{15\cc z^3}{r^7},
\end{split}
\end{equation}
where $r=(x^2+y^2+z^2)^{\frac{1}{2}}$.

To find the equilibrium points we have to solve the system
\[
    \left.
                \begin{array}{ll}
                  \Omega_{x} = 0\\
                  \Omega_{y} = 0\\
                  \Omega_{z} = 0
                \end{array}
              \right\}\Rightarrow
    \left.
                \begin{array}{ll}
                 \displaystyle \left(\lambda_2-\frac{1}{r^3}+\frac{3\cc }{r^5}-\frac{15\cc z^2}{r^7}\right)x :=Ax =0\\[0.5em]
                 \displaystyle\left(\lambda_1-\frac{1}{r^3}+\frac{3\cc }{r^5}-\frac{15\cc z^2}{r^7}\right)y :=B y= 0\\[0.5em]
                 \displaystyle \left(-1-\frac{1}{r^3}+\frac{9\cc }{r^5}-\frac{15\cc z^2}{r^7}\right)z:=C z = 0
                \end{array}
              \right\}
\]

In the above expressions $A$ and $B$ cannot simultaneously equal to $0$ since $\lambda_1\neq\lambda_2$.
Also, $A$ and $C$, or $B$ and $C$, cannot simultaneously equal to $0$, since $A-C=\lambda_2+1-\frac{6\cc }{r^5}>0$ because $\cc <0$; a similar argument holds for $B$ and $C$. This implies that, for example, if $A= 0$, then $B\neq 0$ and $C\neq 0$, so $y=z=0$ and $x$ is given by the equation $A=0$; the same reasoning applies for the other combinations of variables.
Thus, all equilibrium points must lie on the $x$-, $y$-, $z$-coordinate axes.
Precisely, we have the following results.

\begin{description}

\item[$i)$ Equilibrium points on the $x$-axis] In the case  $A= 0$, $B\neq0$, $C\neq0$,
we must have $y=z=0$. From $A=0$ and $z=0$ we infer $\displaystyle h_A(r):=\lambda_2-\frac{1}{r^3}+\frac{3\cc }{r^5}=0$.
We have $\displaystyle  h'_A(r)= \frac{3}{r^4}-\frac{15\cc }{r^6}>0$, since $\cc <0$; also, $\lim_{r\to 0}h_A(r)=-\infty$
and  $\lim_{r\to \infty}h_A(r)=\lambda_2>0$. Hence, the equation $h_A(r)=0$ has a unique solution $r^*_x>0$,
yielding the equilibrium points $(\pm r^*_x,0,0)$.

\item[$ii)$ Equilibrium points on the $y$-axis] In the case  $B= 0$, $A\neq0$, $C\neq0$, we must have $x=z=0$.
From $B=0$ and $z=0$ we infer $\displaystyle h_B(r):=\lambda_1-\frac{1}{r^3}+\frac{3\cc }{r^5}=0$.
We have $\displaystyle  h'_B(r)= \frac{3}{r^4}-\frac{15\cc }{r^6}>0$, since $\cc <0$; also,
$\lim_{r\to 0}h_B(r)=-\infty$ and  $\lim_{r\to \infty}h_B(r)=\lambda_1>0$. Hence,
the equation $h_B(r)=0$ has a unique solution $r^*_y>0$, yielding the equilibrium points $(0,\pm r^*_y,0,0)$.

\item[$iii)$ Equilibrium points on the $z$-axis] In the case  $C= 0$, $A\neq0$, $B\neq0$, we must have $x=y=0$, so $z=\pm r$.
Hence $C=0$ implies $-1-\frac{1}{r^3}+\frac{9\cc }{r^5}-\frac{15\cc r^2}{r^7}=-1-\frac{1}{r^3}-\frac{6\cc }{r^5}
=(-r^5-r^2-6\cc )/r^5=0$.
Let $h_C(r)=-r^5-r^2-6\cc $. We have $h'_C(r)=-5r^4-2r<0$; also, $\lim_{r\to 0} h_C(r)=-6\cc >0$ and
$\lim_{r\to +\infty} h_C(r)=-\infty$. Hence, the equation $h_C(r)=0$ has a unique solution $r^*_z>0$, yielding
the equilibrium points $(0,0,\pm r^*_z)$.
\end{description}
In the case of the Sun-Jupiter-Hektor system, in normalized units,
we obtain $\lambda_1= 0.0021444999866622183$, $\lambda_2=2.997855500013338$, and the equilibrium points location are
given by the following figures.
\[
\begin{tabular}{|l|l|l|l|}
                              \hline
                               & $x$ & $y$ & $z$ \\
                              \hline
                              $x$-axis equilibria & $\pm 0.6935267570$  & 0 & 0 \\
                              $y$-axis equilibria & 0 & $\pm 7.7545747196$  & 0 \\
                              $z$-axis equilibria & 0 & 0 & $\pm 0.0008923544$ \\
                              \hline
                            \end{tabular}
\]

We remark that in the case of the Hill's four body problem without a non-oblate tertiary, the $x$-axis equilibria
and the $y$-axis equilibria also exist, see \cite{Burgos_Gidea}; their locations, in the case of Hektor,
are very close to the ones in the case of an oblate tertiary. Precisely, we have the following results.

\[
\begin{tabular}{|l|l|l|l|}
 \hline
 & $x$ & $y$ & $z$ \\
\hline
$x$-axis equilibria & $\pm 0.6935265657$  & 0 & 0 \\
$y$-axis equilibria & 0 & $\pm 7.7545747024$  & 0 \\
\hline
\end{tabular}
\]

This result leads us to conclude that the $x$-axis equilibria and the $y$-axis equilibria for the
Hill's problem with oblate tertiary are continuations of the ones for the Hill's problem
with non-oblate tertiary. On the other hand, the $z$-axis equilibria do not exist for
the Hill's problem with non-oblate tertiary, so they are new features of the Hill's problem with oblate tertiary.

To summarize, the Hill's three-body problem has 2 equilibrium points, Hill's four-body problem has 4 equilibrium points,
and the Hill's four-body problem with oblate tertiary has 6 equilibrium points.

\vskip.1in

In Fig. \ref{fig:Hektor_z_dependence_on_c} we plot the dependence on the $\cc $ of the distance from $z$-axis
equilibrium point to the origin (in km), when we let the parameter $C_{20}$ range between -0.001 and -0.95.
The estimates on the axes of Hektor are $a=208$ km, $b=65.5$ km, $c=60$  km (\cite{DESCAMPS2015}).
Using the ellipsoid model, we find $C_{20}=-0.476775$ and hence $z\simeq 100$ km, which lies
outside the asteroid. Using $C_{20}=-0.15$ as provided by \cite{Marchis},
we obtain $x\simeq 62$ km, which basically coincides with the surface of the asteroid.
We notice that the vertical equilibrium is not outside the Brillouin sphere, which
corresponds to the region where the spherical harmonic series expansion is convergent.
Inside the Brillouin sphere the series is divergent, if the shape is an ellipsoid.
Given that the true shape of the asteroid is unknown, we cannot determine precisely
the region where the spherical harmonic series is convergent or divergent and, hence, we cannot
decide whether the $z$-equilibrium is indeed real, but just postulate its existence.

\begin{figure}
\includegraphics[width=0.85\textwidth]{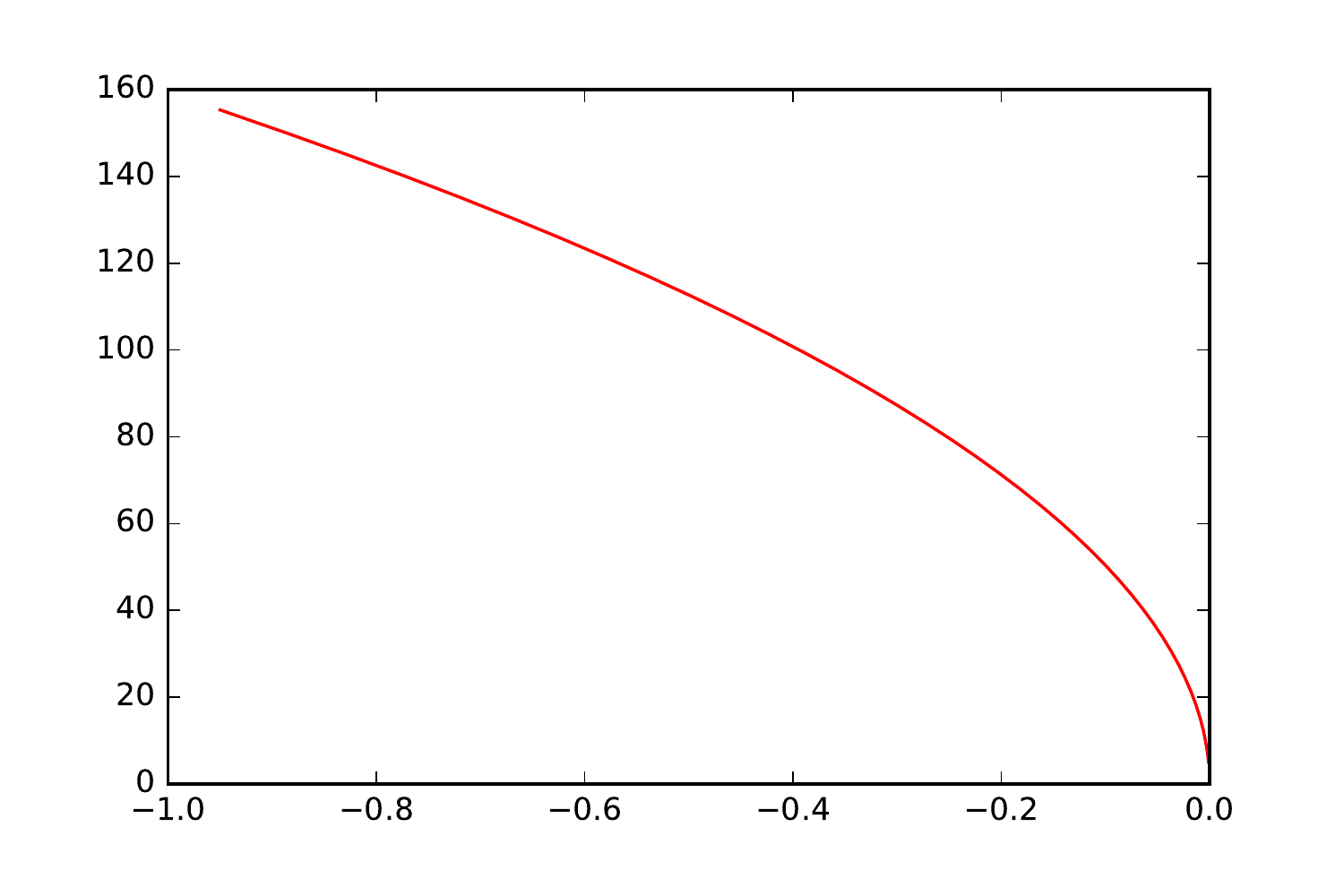}
\caption{The dependence of the $z$-axis equilibrium point distance on the $C_{20}$
rescaled, see \eqref{eqn:normalized_\cc }.}
\label{fig:Hektor_z_dependence_on_c}
\end{figure}

To convert to real units, the distances from the equilibrium points to the center need to be multiplied by $m_3^{1/3}$ -- due to the rescaling involved in the Hill procedure --, and by the unit of distance which in this case is the distance Sun-Jupiter. It follows that the $x$-axis equilibrium points are at a distance of $85,512.774$ km from Hektor,
the $x$-axis equilibrium points are at a distance of $956,149.406$ km, and the
 $z$-axis equilibrium points are at a distance of $110.028$ km. As the smallest semi-minor axis of Hektor is
$60$ km, the $z$-axis equilibrium points are outside the body of the asteroid. It seems though that for many other asteroids the $z$-axis equilibrium points are located inside their body.

\begin{rem}
These  $z$-axis equilibria also appear in the case of the motion
of a particle in a geopotential field (see, e.g.,
\cite{celletti2014dynamics}). From that model it can be derived
that the distance from the $z$-axis equilibrium points to the
center is given by  $\hat r_z= R_3(-3C_{20})^{1/2}$. When we apply
this formula in the case of Hektor, the numerical result is very
close to the one found above. This formula can be derived from the
equation  $h_C(r)=0$ if we drop the term $r^5$.
\end{rem}

\subsection{Linear stability of the equilibrium points}
We study the linear stability of the equilibrium points in the case of Hektor.

The Hamiltonian \eqref{eqn:hill_rotated} yields the following system of equations
\begin{eqnarray*}
&\dot x=v_x, &\dot {v}_x=2v_y+ \Omega_x,\\
&\dot y=v_y, &\dot {v}_y=-v_x+ \Omega_y,\\
&\dot z=v_z,    &\dot{v}_z= \Omega_z,
\end{eqnarray*}
where $\Omega$ is the effective potential given by \eqref{eqn:eff_poten_rot}
(again, we omit the overline bar on the variables).

The second order derivatives of  $\Omega$ are given by
\begin{equation}\begin{split}
\Omega_{xx}=&\lambda_2-\frac{1}{r^3}+\frac{3x^2}{r^5}+\frac{3\cc}{r^5}
-\frac{15\cc x^2}{r^7}-\frac{15\cc z^2}{r^7}+\frac{105\cc z^2x^2}{r^9},\\
\Omega_{yy}=&\lambda_1-\frac{1}{r^3}+\frac{3y^2}{r^5}+\frac{3\cc}{r^5}
-\frac{15\cc y^2}{r^7}-\frac{15\cc z^2}{r^7}+\frac{105\cc z^2y^2}{r^9},\\
\Omega_{zz}=&-1-\frac{1}{r^3}+\frac{3z^2}{r^5}+\frac{9\cc}{r^5}-\frac{90\cc z^2}{r^7}+\frac{105\cc z^4}{r^9},\\
\Omega_{xy}=&\frac{3xy}{r^5}-\frac{15\cc xy}{r^7}+\frac{105\cc z^2xy}{r^9},\\
\Omega_{xz}=&\frac{3xz}{r^5}-\frac{45\cc xz}{r^7}+\frac{105\cc z^3x}{r^9},\\
\Omega_{yz}=&\frac{3yz}{r^5}-\frac{45\cc yz}{r^7}+\frac{105\cc z^3y}{r^9}.
\end{split}\end{equation}

The Jacobian matrix describing the linearized system is
\begin{equation}\label{eqn:jacobi}
\mathscr{J}=\left(
               \begin{array}{rrrrrr}
                  0 & 0 & 0 & 1 & 0 & 0 \\
                 0 & 0 & 0 & 0 & 1 & 0 \\
                  0 & 0 & 0 & 0 & 0 & 1 \\
                 \Omega_{xx} & \Omega_{xy} & \Omega_{xz} & 0 & 2 & 0 \\
                 \Omega_{yx} & \Omega_{yy} & \Omega_{yz} &-2 & 0 & 0 \\
                 \Omega_{zx} & \Omega_{zy} & \Omega_{zz} & 0 & 0 & 0 \\
               \end{array}
             \right).
\end{equation}
Since the equilibria are of the form $(\pm r^*_x,0,0)$, $(0,\pm r^*_y,0)$, $(0,0,\pm r^*_z)$, the mixed second order partial  derivatives $\Omega_{xy}$, $\Omega_{xz}$, $\Omega_{yz}$ vanish at each of the equilibrium points.

Hence Jacobian matrix \eqref{eqn:jacobi} evaluated at the equilibria is of the form:
\begin{equation}\label{eqn:jacobi_at_z}
\mathscr{J}=\left(
               \begin{array}{rrrrrr}
                  0 & 0 & 0 & 1 & 0 & 0 \\
                 0 & 0 & 0 & 0 & 1 & 0 \\
                  0 & 0 & 0 & 0 & 0 & 1 \\
                 \Omega_{xx} &0 & 0 & 0 & 2 & 0 \\
                0 & \Omega_{yy} & 0 &-2 & 0 & 0 \\
                0 & 0 & \Omega_{zz} & 0 & 0 & 0 \\
               \end{array}
             \right),
\end{equation}

The characteristic equation of \eqref{eqn:jacobi_at_z} is
\begin{equation}\label{eqn:charcteristic}
    (\rho^2-\Omega_{zz})(\rho^4+(4-\Omega_{xx}-\Omega_{yy})\rho^2+\Omega_{xx}\Omega_{yy})=0.
\end{equation}

The stability of the equilibria  depends on the signs of $\Omega_{zz}$ and of $A$, $B$, and $D$.

We find the following stability character of the equilibrium positions in the case of the Sun-Jupiter-Hektor system:
\begin{description}
\item[$i)$ Eigenvalues of $x$-axis equilibria at $(\pm 0.6935267570,  0 ,0)$]
\begin{eqnarray*} &2.50694248&-2.50694248,\\
        &2.07048307i,  &-2.07048307i,\\
        &1.99946504i,  &- 1.99946504i.
\end{eqnarray*}
\textbf{Stability type:} center $\times $ center $\times $ saddle.

\item[$ii)$ Eigenvalues of $y$-axis equilibria at $(0,7.7545747196,0)$]
\begin{eqnarray*}
& 0.98901573i,   &-0.98901573i,\\
& 0.14036874i,   &-0.14036874i,\\
& 1.00107168i,   &-1.00107168i
\end{eqnarray*}
\textbf{Stability type:} center $\times $ center $\times $ center.

\item[$iii)$ Eigenvalues of $z$-axis equilibria at $(0,0,\pm 0.0008923544)$]
\begin{eqnarray*}
&37514.0432165187+0.9999999998i, &-37514.0432165187+0.9999999998j,\\
&37514.0432165187-0.9999999998i, &-37514.0432165187-0.9999999998i\\
&53052.8687i,                    &-53052.8687i
\end{eqnarray*}
\textbf{Stability type:} center $\times $ complex saddle.
\end{description}

Note that for the $z$-axis equilibria the imaginary part of the `Krein quartet' of
eigenvalues  of $z$-axis equilibria is approximately $\pm 1$.
This means that the infinitesimal motion around the equilibrium point is close to the $1:1$ resonance with the rotation of the primary and the secondary.
In Fig. \ref{fig:Hektor_eigenvalues} we show that for a range of $r^*_z$ values between $z=0.000892354498497342$ (corresponding to the $\cc $ value for Hektor) and $z=0.01$ (corresponding to $\cc =-1.666668333\times 10^{-5}$), the real part and the imaginary part of the `Krein quartet' of
eigenvalues; the imaginary part stays close to $\pm 1$.
\begin{figure}$\begin{array}{cc}
\includegraphics[width=0.5\textwidth]{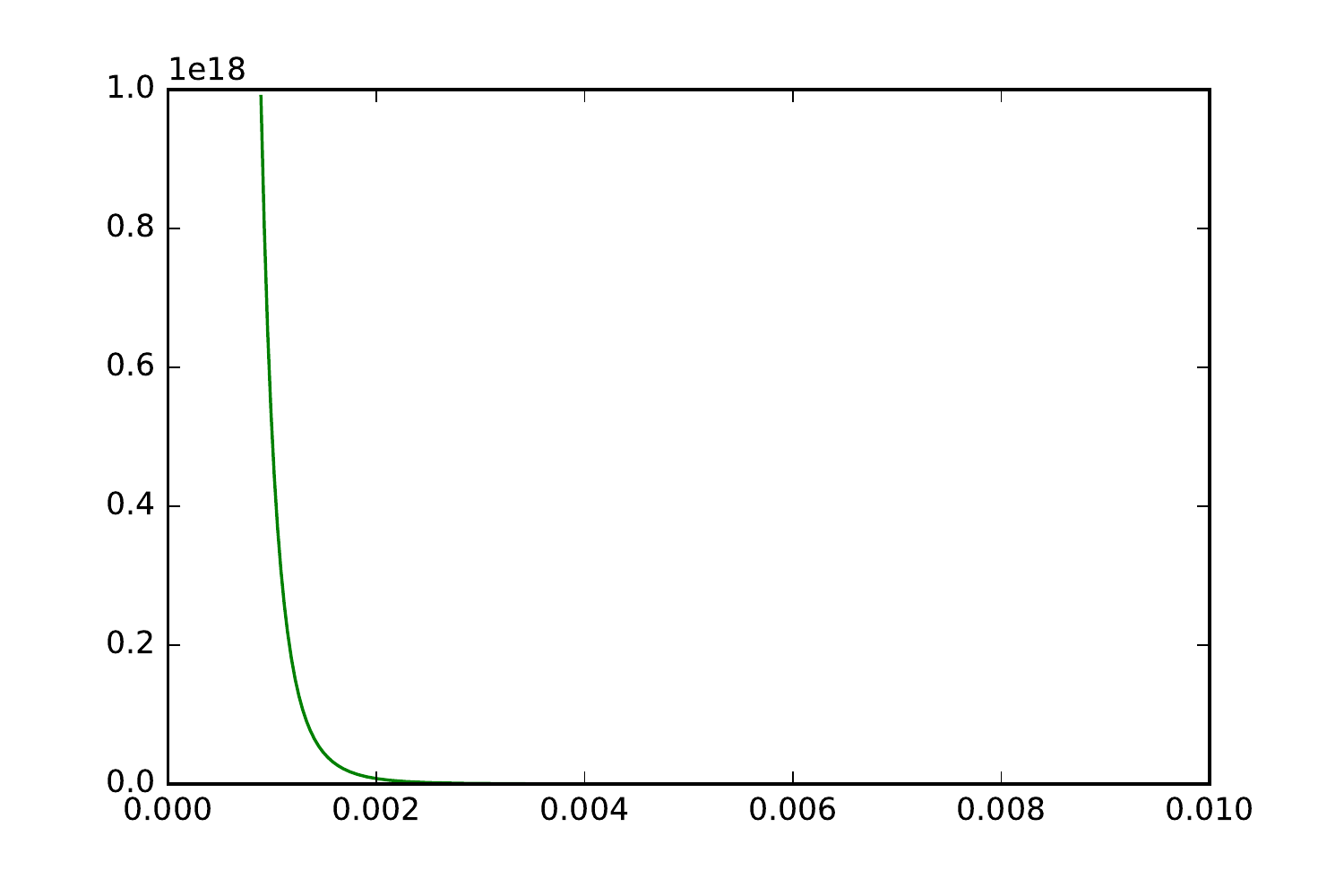} &
\includegraphics[width=0.5\textwidth]{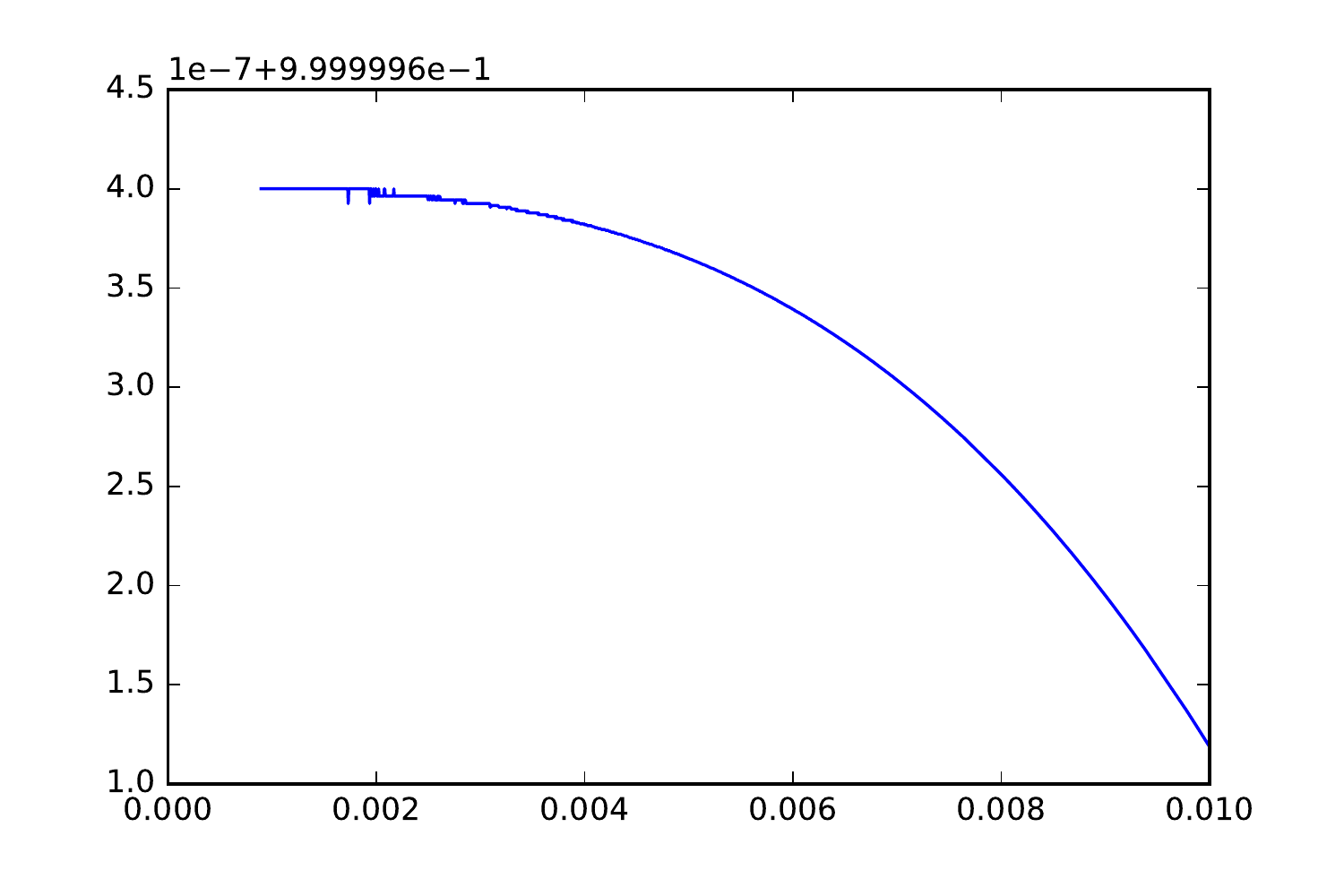}
\end{array}$
\caption{The dependence of the real part (left) and imaginary part (right) of the  Krein quartet of
eigenvalues on  the $z$-axis equilibrium point. The horizontal axis represents the distance $r^*_z$  from the equilibrium point to the origin,
the vertical axis the real part (left), and the absolute value of the imaginary part (right) of the eigenvalues. The former never changes sign, and the latter stays within $4\times 10^{-7}$ from $1$.}
\label{fig:Hektor_eigenvalues}
\end{figure}

In Section \ref{sec:z_stability} we will provide an analytic argument that the real part of the `Krein quartet' of
eigenvalues is always non-zero, and the imaginary part is  close to $\pm 1$  for $r^*_z$ sufficiently small;
this result will help us to explain the behavior observed in Fig.~\ref{fig:Hektor_eigenvalues}.

In the sequel we give a more detailed analysis of the linear stability of all equilibria, for a wide range of parameters $\mu$ and $\cc$.

\subsubsection{Linear stability of the equilibria on the $z$-axis}
\label{sec:z_stability}

The $z$-axis equilibrium points are of the form $(0,0,\pm \rz)$, with
\begin{equation}\label{eqn:z_eq}
-(\rz)^5-(\rz)^2-6\cc =0,
\end{equation}
which yields\begin{equation}\label{eqn:z_c20} \cc=\frac{-(\rz)^2- (\rz)^5}{6}.
\end{equation}
Evaluating $\Omega_{xx}$, $\Omega_{yy}$, $\Omega_{zz}$ at the equilibrium point
yields:

\begin{equation*}\begin{split}
\Omega_{xx}=& \lambda_2 - (r^*_z)^{-3}-12\cc (r^*_z)^{-5},\\
\Omega_{yy}=& \lambda_1 - (r^*_z)^{-3}-12\cc (r^*_z)^{-5},\\
\Omega_{zz}=& -1+2(r^*_z)^{-3}+24\cc (r^*_z)^{-5}.
\end{split}\end{equation*}
Substituting in \eqref{eqn:z_c20}
we have
\begin{equation}\label{eqn:Omega_e}
\begin{split}
\Omega_{xx}&=2+\lambda_2+(r^*_z)^{-3},\\
\Omega_{yy}&=2+\lambda_1+(r^*_z)^{-3},\\
\Omega_{zz}&=-5-2(r^*_z)^{-3}.
\end{split}\end{equation}
Using \eqref{eq18} and denoting $d:=\sqrt{1-v^2(4-v^2)(\mu-\mu^2)}$ we can write
\begin{equation}\label{eqn:lambda}
\begin{split}
\lambda_1=\frac{3}{2}(1-d),\\
\lambda_2=\frac{3}{2}(1+d).
\end{split}
\end{equation}

Also for $\cc=0$   we have $d_0=\sqrt{1-3(\mu-\mu^2)}$ and
\begin{equation}\label{eqn:lambda0}
\begin{split}
\lambda_{10}=&\frac{3}{2}(1-d_0),\\
\lambda_{20}=&\frac{3}{2}(1+d_0).
\end{split}
\end{equation}
This is in agreement with the results in  \cite{Burgos_Gidea}.

For future reference, we expand $d$ as a power series in the parameter $\cc$ as
\begin{equation}\label{eqn:d_power}
d=d_0+d_1 \cc+ O(\cc^2),
\end{equation}
where the coefficient  $d_1$   can be obtained can be obtained from
the Taylor's theorem around $\cc = 0$ as
\begin{equation}\label{eqn:d_coeff}
\begin{split}
d_1=&-\frac{2(\mu-\mu^2)}{d_0} m_3^{2/3}.
\end{split}
\end{equation}

From the characteristic  equation \eqref{eqn:charcteristic},  we obtain that the pair of  eigenvalues   $\rho_{1,2}=\pm (\Omega_{zz})^{1/2}$ is purely imaginary, since by \eqref{eqn:Omega_e}, $\Omega_{zz}<0$.

The `Krein quartet' eigenvalues  are  given by
\begin{equation}\label{eqn:quadratic}
    \rho_{3,4,5,6}= \pm\sqrt{\frac{-A\pm\sqrt{A^2-4B}}{2}},
\end{equation}
where
\begin{equation*}\begin{array}{lllll}A&=&\displaystyle 4-\Omega_{xx}-\Omega_{yy}&=&-3-\frac{2}{(r^*_z)^3},\\
B&=&\Omega_{xx}\Omega_{yy}&=&\displaystyle 10+{9\over 4} v^2(4-v^2)(\mu-\mu^2)
+\frac{7}{(r^*_z)^3}+\frac{1}{(r^*_z)^6}.\end{array}\end{equation*}

Then we have
\begin{equation*}\begin{split}
D:=A^2-4B=&d^2-40-\frac{16}{(r^*_z)^3}=-31-9v^2(4-v^2)(\mu-\mu^2)-\frac{16}{(r^*_z)^3}<0.
\end{split}\end{equation*}

Since $-A>0$ and $D<0$, we obtain that the eigenvalues $\rho_{3,4,5,6}$ are complex numbers, non-real, non-purely-imaginary, for all parameter values. Let $\rho=a+ib$ be such that $\rho^2=-\frac{A}{2}\pm\frac{\sqrt{4B-A^2}}{2}i:=\alpha+i\beta$. We have  \[a+ib=\left(\frac{(\alpha^2+\beta^2)^{\frac{1}{2}}+\alpha}{2}\right)^{\frac{1}{2}}+
\textrm{sign}(\beta)\left(\frac{(\alpha^2+\beta^2)^{\frac{1}{2}}-\alpha}{2}\right)^{\frac{1}{2}}\ i.
\]

To show that $b$ is approximately $\pm 1$, or $b^2\approx 1$, for $r^*_z\approx 0$, note that
\begin{equation*}\begin{split}
b^2=&\frac{(\alpha^2+\beta^2)^{\frac{1}{2}}-\alpha}{2}=\frac{A}{4}+\frac{\sqrt{B}}{2}\\
=&-\frac{3}{4}+\frac{1}{2}\left[\left(10+\frac{9}{4}\Upsilon+\frac{7}{(r^*_z)^3}+
\frac{1}{(r^*_z)^6} \right)^{\frac{1}{2}}-\frac{1}{(r^*_z)^3}\right]\\
=&-\frac{3}{4}+\frac{1}{2}\frac{10+\frac{9}{4}\Upsilon+\frac{7}{(r^*_z)^3}+
\frac{1}{(r^*_z)^6} -\frac{1}{(r^*_z)^6}}{\left(10+\frac{9}{4}\Upsilon+\frac{7}{(r^*_z)^3}+
\frac{1}{(r^*_z)^6} \right)^{\frac{1}{2}}+\frac{1}{(r^*_z)^3}}\\
=&-\frac{3}{4}+\frac{1}{2}\frac{10+\frac{9}{4}\Upsilon+\frac{7}{(r^*_z)^3}}{\left(10+\frac{9}{4}\Upsilon+\frac{7}{(r^*_z)^3}+
\frac{1}{(r^*_z)^6} \right)^{\frac{1}{2}}+\frac{1}{(r^*_z)^3}},
\end{split}
\end{equation*}
where $\Upsilon:=v^2(4-v^2)(\mu-\mu^2)$.
Since
\[\lim_{r^*_z\to 0} \frac{10+\frac{9}{4}\Upsilon+\frac{7}{(r^*_z)^3}}{\left(10+\frac{9}{4}\Upsilon+\frac{7}{(r^*_z)^3}+
\frac{1}{(r^*_z)^6} \right)^{\frac{1}{2}}+\frac{1}{(r^*_z)^3}}  = \frac{7}{2},\]
we have that $\lim _{r^*_z\to 0} b^2=-\frac{3}{4}+\frac{7}{4}=1$, so $b^2\approx 1$ for $r^*_z\approx 0$, as in the case of Hektor.

We obtained  the following result:
\begin{prop}\label{prop:_lin_stab}
Consider the equilibria on the $z$-axis. For   $\mu\in(0,1/2]$, $\Omega_{zz}$, $A$ and $D$ are negative. Consequently, one pair of eigenvalues is purely imaginary, and the two other pairs of eigenvalues are complex conjugate, with the imaginary part close to  $\pm i$ for $\cc$ negative and sufficiently small. The linear stability is of  center $\times$ complex-saddle type.
\end{prop}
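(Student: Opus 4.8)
The proof assembles the computations already displayed just before the statement. The plan is: \emph{(i)} evaluate the Hessian of the effective potential $\Omega$ of \eqref{eqn:eff_poten_rot} at a $z$-axis equilibrium and simplify it using the equilibrium relation \eqref{eqn:z_eq}; \emph{(ii)} read the three pairs of eigenvalues off the factored characteristic polynomial \eqref{eqn:charcteristic}; \emph{(iii)} settle the signs of $\Omega_{zz}$, $A$ and $D$; and \emph{(iv)} establish the asymptotics of the imaginary part of the Krein quartet as $\cc\to 0^-$.

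First I would substitute $(x,y,z)=(0,0,\pm\rz)$, so that $r=\rz$ and the mixed second derivatives $\Omega_{xy},\Omega_{xz},\Omega_{yz}$ all vanish, reducing the Jacobian to the block form \eqref{eqn:jacobi_at_z}. Eliminating $\cc$ via $6\cc=-(\rz)^2-(\rz)^5$ from \eqref{eqn:z_eq} turns the surviving second derivatives into the compact expressions \eqref{eqn:Omega_e}, namely $\Omega_{xx}=2+\lambda_2+(\rz)^{-3}$, $\Omega_{yy}=2+\lambda_1+(\rz)^{-3}$, $\Omega_{zz}=-5-2(\rz)^{-3}$. Since $\rz>0$ and $\lambda_1,\lambda_2>0$ for $\mu\in(0,1/2]$ (the positivity shown following \eqref{eq18}), we get $\Omega_{zz}<0$ at once; hence the factor $\rho^2-\Omega_{zz}$ of \eqref{eqn:charcteristic} contributes the purely imaginary pair $\rho_{1,2}=\pm(\Omega_{zz})^{1/2}$, the center direction along the $z$-axis.

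Next I would treat the quartic factor $\rho^4+A\rho^2+B$. Using Vi\`ete's relations for \eqref{eq18}, $\lambda_1+\lambda_2=3$ and $\lambda_1\lambda_2=\tfrac94 v^2(4-v^2)(\mu-\mu^2)$, I get the closed forms $A=4-\Omega_{xx}-\Omega_{yy}=-3-2(\rz)^{-3}$, $B=\Omega_{xx}\Omega_{yy}=10+\tfrac94\Upsilon+7(\rz)^{-3}+(\rz)^{-6}$ and $D=A^2-4B=-31-9\Upsilon-16(\rz)^{-3}$, where $\Upsilon:=v^2(4-v^2)(\mu-\mu^2)$. On $\mu\in(0,1/2]$ one has $\Upsilon\ge0$ (since $0<v\le1$ and $0\le\mu-\mu^2\le\tfrac14$), so $A<0$, $B>0$ and $D<0$ for every admissible parameter value. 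With $A<0$, $B>0$, $D<0$ the two roots $\rho^2=\tfrac12(-A\pm\sqrt D)=\alpha\pm i\beta$ have $\alpha=-A/2>0$ and $\beta=\sqrt{4B-A^2}/2>0$, so their square roots form a genuine Krein quartet $\{\pm a\pm ib\}$ with $a\ne0$ and $b\ne0$ — the complex-saddle block. Together with the $z$-direction pair, this shows the linearization is of center $\times$ complex-saddle type.

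It remains to show $b^2\approx1$ when $|\cc|$ is small; by \eqref{eqn:z_c20} one has $\cc=-\tfrac16\bigl((\rz)^2+(\rz)^5\bigr)$, so "$\cc$ negative and small" is equivalent to "$\rz$ small". Viewing $\rho^2=\alpha+i\beta$ as a root of $\rho^4+A\rho^2+B=0$, the product of the two conjugate roots gives $\alpha^2+\beta^2=B$, whence $b^2=\tfrac12\bigl((\alpha^2+\beta^2)^{1/2}-\alpha\bigr)=\tfrac{A}{4}+\tfrac{\sqrt B}{2}$. Rationalizing, $\sqrt B-(\rz)^{-3}=\bigl(10+\tfrac94\Upsilon+7(\rz)^{-3}\bigr)\big/\bigl(\sqrt B+(\rz)^{-3}\bigr)$, and since $\tfrac{A}{4}+\tfrac12(\rz)^{-3}=-\tfrac34$ we obtain $b^2=-\tfrac34+\tfrac12\bigl(\sqrt B-(\rz)^{-3}\bigr)$, which tends to $-\tfrac34+\tfrac12\cdot\tfrac72=1$ as $\rz\to0$. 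The only mildly delicate points are the bookkeeping that produces $B$ and $D$ in closed form together with controlling the sign of $\Upsilon$ on the stated $\mu$-range, and the rationalization that isolates the limiting value $7/2$; everything else follows directly from \eqref{eqn:Omega_e}--\eqref{eqn:charcteristic}.
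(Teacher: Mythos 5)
Your proposal is correct and follows essentially the same route as the paper: evaluate the Hessian at $(0,0,\pm\rz)$, eliminate $\cc$ via the equilibrium relation to get \eqref{eqn:Omega_e}, read the signs of $\Omega_{zz}$, $A=-3-2(\rz)^{-3}$, $B$, and $D=-31-9\Upsilon-16(\rz)^{-3}$ from the factored characteristic equation, and then obtain $b^2=\tfrac{A}{4}+\tfrac{\sqrt B}{2}\to1$ as $\rz\to0$ by the same rationalization; your only (welcome) addition is the explicit remark that small $|\cc|$ is equivalent to small $\rz$ via \eqref{eqn:z_c20}, which the paper leaves implicit.
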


\subsubsection{Linear stability of the equilibria on the $y$-axis}
\label{sec:y_stability}

The $y$-axis equilibrium points are of the form $(0,\pm r^*_y,0)$, with
\begin{equation}\label{eqn:y_eq}\lambda_1 (\ry)^5-(\ry)^2+3\cc=0,\end{equation}
which yields
\begin{equation}\label{eqn:y_c20}
\cc=\frac{(\ry)^2-\lambda_1 (\ry)^5}{3}.
\end{equation}
Evaluating $\Omega_{xx}$, $\Omega_{yy}$, $\Omega_{zz}$ at the equilibrium point
yields:
\begin{equation}\label{eqn:y_Omega_second_der_0}
\begin{split}
\Omega_{xx}=&\lambda_2-\frac{1}{(\ry)^3}+\frac{3\cc}{(\ry)^5},\\
\Omega_{yy}=&\lambda_1+\frac{2}{(\ry)^3}-\frac{12\cc}{(\ry)^5},  \\
\Omega_{zz}=& -1-\frac{1}{(\ry)^3}+\frac{9\cc}{(\ry)^5}.
\end{split}\end{equation}

Substituting $\cc$ from \eqref{eqn:y_c20} we obtain
\begin{equation}\label{eqn:y_Omega_second_der}\begin{split}
\Omega_{xx}=&\lambda_2-\lambda_1 ,\\
\Omega_{yy}=& 5\lambda_1-\frac{2}{(\ry)^3},  \\
\Omega_{zz}=& -1-3\lambda_1+\frac{2}{(\ry)^3},
\end{split}\end{equation}

We also expand $\ry$ as a power series in the parameter $\cc$ as
\begin{equation}\label{eqn:y_r_power}
\ry=\ryo+r_{y1} \cc+O(\cc^2),
\end{equation}
where $\pm\ryo$ is the position of the $y$-equilibrium in the case when $\cc=0$, which is given by
$\ryo^3=1/\lambda_{10}$; this is in agreement with \cite{Burgos_Gidea}.
The computation of $r_{y1}$   yields
\begin{equation}\label{eqn:y_r_coeff}
\begin{split}
r_{y1}=& \frac{-1 +(1/2)d_1\ryo^5}{\ryo},
\end{split}
\end{equation}
with $d_1$ as in \eqref{eqn:d_coeff}.

We will also need $\frac{1}{(\ry)^3}$ as a power series in the parameter $\cc$
\begin{equation}\label{eqn:y_r_cube_inv_power}
\frac{1}{(\ry)^3}=\alpha+\beta \cc+O(\cc^2),
\end{equation}
and a simple calculation yields
\begin{equation}\label{eqn:y_r_cube_inv}
\begin{split}
\alpha=&\frac{1}{\ryo^3},\\
\beta=&  -\frac{3r_{y1}}{\ryo^4}.
\end{split}
\end{equation}

For $\mu=1/2$, we have $d_0=\frac{1}{2}$, $\lambda_{10}=\frac{3}{4}$, $d_1=-m_3^{2/3}$, $r_{y0}=\left(\frac{4}{3}\right)^{1/3}$.
It is easy to see that  dominant part $d_0$ of $d$ is a strictly decreasing function with respect to $\mu\in(0,1/2]$ and takes values in $[1/2,1)$.
The dominant part $\lambda_{10}$ of $\lambda_1$ is increasing with respect to $\mu\in(0,1/2]$ and takes values in $(0,3/4] $.
Also, the dominant part $\ryo$ of $\ry$ is a strictly decreasing function in $\mu\in(0,1/2]$, where   $\ryo(1/2)=\sqrt[3]{4/3}$ and $\ryo\rightarrow\infty$ when $\mu\rightarrow 0$; as a consequence the values of $\ryo$ are in the interval $[\sqrt[3]{4/3},\infty)$.

From \eqref{eqn:y_Omega_second_der_0} we have
\begin{equation*}\begin{split}
\Omega_{zz} & =-1-\frac{1}{(\ry)^3}+\frac{9\cc}{(\ry)^5}\\
& =  -\frac{1}{(\ry)^5}((\ry)^5+(\ry)^2-9\cc)\\
& < 0
\end{split}\end{equation*}
since $\ry>0$ and $\cc$ is negative.
Therefore, $\Omega_{zz}<0$ for all admissible values of $\mu$.

For $A=4-\Omega_{xx}-\Omega_{yy}$, using \eqref{eqn:lambda0} and the expansions \eqref{eqn:d_power} and \eqref{eqn:y_r_cube_inv_power} we obtain
\begin{equation*}\begin{split}
A   & = 1-3\lambda_1 +\frac{2}{(\ry)^3}\\
    & = 1-3\lambda_{10}+\frac{2}{(\ryo)^3}+O(\cc)\\
    & = 1-3\lambda_{10}+2\lambda_{10}+O(\cc)\\
    & > 0
\end{split}\end{equation*}
for $\cc$ small.

For $B=\Omega_{xx}\Omega_{yy}$ using \eqref{eqn:lambda0} and  the expansions \eqref{eqn:d_power} and \eqref{eqn:y_r_cube_inv_power} we obtain
\begin{equation*}\begin{split}
 B  & = (\lambda_2-\lambda_1)\left (5\lambda_1 -\frac{2}{(\ry)^3}\right)\\
    & = (3d)\left (\frac{15}{2} -\frac{15d}{2} -\frac{2}{(\ry)^3}\right)\\
    & = (3d_0)\left(5\lambda_{10}-\frac{2}{\ryo^3}\right)+O(\cc)\\
    & = (3d_0)\left(5\lambda_{10}-2\lambda_{10}\right)+O(\cc)\\
    & > 0
\end{split}\end{equation*}
for $\cc$ small.

For $D=A^2-4B$, using \eqref{eqn:lambda0} and  the expansions \eqref{eqn:d_power} and \eqref{eqn:y_r_cube_inv_power} we have
\begin{equation*}\begin{split}
 D & =\left(1-3\lambda_{10}+\frac{2}{\ryo^3}\right)^2-4(3d_0)\left(5\lambda_{10}-\frac{2}{\ryo^3}\right)+O(\cc)\\
   & =\left(1-\lambda_{10}\right)^2-12(3-2\lambda_{10})\lambda_{10}+O(\cc).
\end{split}\end{equation*}
For $\mu\approx 0$ we have $D\approx 1+O(\cc)$ and for $\mu=1/2$ we have $D=-\frac{215}{16}+O(\cc)$.
The intermediate value theorem implies that $D$ changes its sign from positive to negative  for $\mu\in(0,1/2]$, provided  $\cc$ is small.
 We have thus proved the following result:

\begin{prop}\label{prop:y_lin_stab}
Consider the equilibria on the $y$-axis. For   $\mu\in(0,1/2]$  and for the parameter $\cc$ negative and small enough, $\Omega_{zz}$ is always negative,  the coefficients $A$ and $B$ are always positive, and the value of the discriminant $D$ changes from positive to negative values. Consequently, one pair of eigenvalues is always purely imaginary, and  there exists   $\mu_*$, depending on $c_{20}$, where the other two pairs of  eigenvalues change from being purely imaginary to  being complex conjugate. The linear stability changes from   center $\times$ center $\times$ center type to  center $\times$ complex-saddle type.
\end{prop}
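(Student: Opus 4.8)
The plan is to read off the stability type directly from the factored characteristic equation \eqref{eqn:charcteristic} evaluated at a $y$-axis equilibrium $(0,\pm\ry,0)$. The factor $\rho^2-\Omega_{zz}$ contributes the pair $\rho=\pm\sqrt{\Omega_{zz}}$, purely imaginary precisely when $\Omega_{zz}<0$, while the quartic factor contributes the four roots \eqref{eqn:quadratic} with $A=4-\Omega_{xx}-\Omega_{yy}$, $B=\Omega_{xx}\Omega_{yy}$ and $D=A^2-4B$. I would first isolate the elementary dichotomy that drives everything: if $A>0$, $B>0$ and $D\ge 0$, then $\rho^2=(-A\pm\sqrt D)/2$ are two negative reals — negativity of the larger root $(-A+\sqrt D)/2$ follows from $\sqrt D<A$, which is equivalent to $B>0$ — so all four roots are purely imaginary and the block is of center$\times$center type; whereas if $A>0$, $B>0$ and $D<0$, then $\rho^2=(-A\pm i\sqrt{-D})/2$ is genuinely complex with $\mathrm{Re}(\rho^2)=-A/2<0$, so its square roots form a Krein quartet $\{\pm a\pm ib\}$ with $a\neq 0$, i.e.\ a complex saddle. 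Thus the whole statement reduces to tracking the signs of $\Omega_{zz}$, $A$, $B$, $D$ at the $y$-axis equilibrium.

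Next I would substitute the equilibrium relation \eqref{eqn:y_eq}, i.e.\ eliminate $\cc$ via \eqref{eqn:y_c20}, into the second derivatives \eqref{eqn:y_Omega_second_der_0} to obtain the reduced forms \eqref{eqn:y_Omega_second_der}: $\Omega_{xx}=\lambda_2-\lambda_1=3d$, $\Omega_{yy}=5\lambda_1-2(\ry)^{-3}$, $\Omega_{zz}=-1-3\lambda_1+2(\ry)^{-3}$, and correspondingly $A=1-3\lambda_1+2(\ry)^{-3}$, $B=3d\,(5\lambda_1-2(\ry)^{-3})$. The sign of $\Omega_{zz}$ is immediate and \emph{unconditional}: rewriting it from \eqref{eqn:y_Omega_second_der_0} as $\Omega_{zz}=-(\ry)^{-5}\big((\ry)^5+(\ry)^2-9\cc\big)$, with $\ry>0$ and $\cc<0$, gives $\Omega_{zz}<0$ for every admissible $\mu$, so one pair of eigenvalues is always purely imaginary.

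For $A$, $B$, $D$ I would expand in the small parameter $\cc$ as $\cc\to 0^-$, using $\ry=\ryo+O(\cc)$ from \eqref{eqn:y_r_power} and \eqref{eqn:y_r_cube_inv_power} with $\ryo^3=1/\lambda_{10}$, together with $\lambda_{10}=\tfrac32(1-d_0)$, $d_0=\sqrt{1-3(\mu-\mu^2)}$ from \eqref{eqn:lambda0}, \eqref{eq18}, \eqref{eqn:d_power}. This yields $A=(1-3\lambda_{10}+2\lambda_{10})+O(\cc)=(1-\lambda_{10})+O(\cc)$ and $B=3d_0(5\lambda_{10}-2\lambda_{10})+O(\cc)=9d_0\lambda_{10}+O(\cc)$; since $\lambda_{10}\in(0,3/4]$ and $d_0\in[1/2,1)$ on $(0,1/2]$, both leading terms are strictly positive, hence $A>0$ and $B>0$ for $\cc$ negative and small. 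Then $D=(1-\lambda_{10})^2-36\,d_0\lambda_{10}+O(\cc)$, and using $3d_0=3-2\lambda_{10}$ the leading part becomes the quadratic $D_0(\mu)=25\lambda_{10}^2-38\lambda_{10}+1$ in $\lambda_{10}$; its only root in $(0,3/4]$ is $\lambda_{10}=(19-4\sqrt{21})/25$, and since $D_0'=50\lambda_{10}-38<0$ there, $D_0$ is decreasing through this root, so $D_0>0$ to its left ($D_0\to1$ as $\mu\to0$) and $D_0<0$ to its right ($D_0=-215/16$ at $\mu=1/2$). Because $\mu\mapsto\lambda_{10}$ is increasing on $(0,1/2]$, this produces a unique $\mu_*$ (depending on $\cc$ through the $O(\cc)$ correction) at which $D$ changes sign from positive to negative. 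Combining with the dichotomy of the first step: the $z$-pair is always a center, and the quartet is center$\times$center for $\mu<\mu_*$ and a complex saddle for $\mu>\mu_*$, i.e.\ the linear stability passes from center$\times$center$\times$center to center$\times$complex-saddle, which is the claim.

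The one genuinely delicate point, and the step I expect to require care, is the uniformity of the $O(\cc)$ remainders in $\mu$: as $\mu\to0$ one has $\lambda_{10}\to0$ and $\ryo\to\infty$, so the leading positive quantity $B_0=9d_0\lambda_{10}$ degenerates while the coefficients $r_{y1}$ in \eqref{eqn:y_r_power} and $\beta$ in \eqref{eqn:y_r_cube_inv_power} blow up; hence a priori the threshold $|\cc|<\cc_*$ below which the sign assertions hold depends on $\mu$, and a clean global statement needs $\mu$ restricted to a compact subinterval of $(0,1/2]$ (or $\cc_*$ allowed to shrink near $\mu=0$). The remaining bookkeeping — confirming via the single-root analysis of $D_0$ and the monotonicity of $\lambda_{10}(\mu)$ that there is exactly one sign change and that it goes from $+$ to $-$ — is routine; everything else is substitution of \eqref{eqn:y_c20} followed by Taylor expansion.
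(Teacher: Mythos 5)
Your proposal is correct and follows essentially the same route as the paper's proof: eliminate $\cc$ via the equilibrium relation \eqref{eqn:y_c20} to reduce $\Omega_{xx},\Omega_{yy},\Omega_{zz}$, observe $\Omega_{zz}<0$ unconditionally, expand $A$, $B$, $D$ to leading order in $\cc$ using $\ryo^3=1/\lambda_{10}$, and detect the sign change of $D$ through the leading-order expression in $\lambda_{10}$ (the paper via the endpoint values $D\approx 1$ and $D=-215/16$ plus the intermediate value theorem, you via the explicit root $(19-4\sqrt{21})/25$ of $25\lambda_{10}^2-38\lambda_{10}+1$ and monotonicity of $\lambda_{10}(\mu)$, which additionally gives uniqueness of $\mu_*$). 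Your explicit Krein dichotomy and your caveat about non-uniformity of the $O(\cc)$ remainders as $\mu\to 0$ (where $\ryo\to\infty$) are sound refinements of points the paper leaves implicit.
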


\subsubsection{Linear stability of the equilibria on the $x$-axis}

The $x$-axis equilibrium points are of the form $(\pm \rx, 0,0)$, with
\begin{equation}\label{eqn:x_eq}\lambda_2(\rx)^5-(\rx)^2+3\cc=0,\end{equation}
which yields
\begin{equation}\label{eqn:x_c20}
\cc=\frac{(\rx)^2-\lambda_2 (\rx)^5}{3}.
\end{equation}

Evaluating $\Omega_{xx}$, $\Omega_{yy}$, $\Omega_{zz}$  at the equilibrium point
yields:
\begin{equation}\label{eqn:x_Omega_second_der_0}
\begin{split}
\Omega_{xx}=&\lambda_2+\frac{2}{(\rx)^3}-\frac{12\cc}{(\rx)^5},\\
\Omega_{yy}=&\lambda_1-\frac{1}{(\rx)^3}+\frac{3\cc}{(\rx)^5},  \\
\Omega_{zz}=& -1-\frac{1}{(\rx)^3}+\frac{9\cc}{(\rx)^5}.
\end{split}\end{equation}

Substituting $\cc$ from \eqref{eqn:x_c20} we obtain
\begin{equation}\label{eqn:x_Omega_second_der}\begin{split}
\Omega_{xx}=&5\lambda_2-\frac{2}{(\rx)^3} ,\\
\Omega_{yy}=& \lambda_1-\lambda_2,  \\
\Omega_{zz}=& -1-3\lambda_2+\frac{2}{(\rx)^3},
\end{split}\end{equation}

We  expand $\rx$ as a power series in the parameter $\cc$ as
\begin{equation}\label{eqn:r_power}
\rx=\rxo+r_{x1} \cc+ O(\cc^2),
\end{equation}
where $\pm\rxo$ is the position of the $x$-equilibrium in the case when $\cc=0$, which is given by
$\rxo^3=1/\lambda_{20}$; see \cite{Burgos_Gidea}.
The computation of $r_{x1}$  yields
\begin{equation}\label{eqn:x_r_coeff}
\begin{split}
r_{x1}=& \frac{-1-(1/2)d_1\rxo^5}{\rxo}.
\end{split}
\end{equation}

We will also need $\frac{1}{(\rx)^3}$ as a power series in the parameter $\cc$
\begin{equation}\label{eqn:x_r_cube_inv_power}
\frac{1}{(\rx)^3}=\alpha'+\beta' \cc+O(\cc^2),
\end{equation}
and a simple calculation yields
\begin{equation}\label{eqn:x_r_cube_inv}
\begin{split}
\alpha' = & \frac{1}{\rxo^3},\\
\beta'  = &  -\frac{3r_{x1}}{\rxo^4}.
\end{split}
\end{equation}

From \eqref{eqn:x_Omega_second_der_0} we have
\begin{equation*}\begin{split}
\Omega_{zz} & = -1-\frac{1}{(\rx)^3}+\frac{9\cc}{(\rx)^5}\\
&=-\frac{1}{(\rx)^5}((\rx)^5+(\rx)^2-9\cc)\\
&<0
\end{split}\end{equation*}
since $\rx>0$ and $\cc<0$.
Therefore, $\Omega_{zz}<0$ for all admissible values of $\mu$.

For $A=4-\Omega_{xx}-\Omega_{yy}$, using \eqref{eqn:lambda0} and   the expansions \eqref{eqn:d_power} and \eqref{eqn:x_r_cube_inv_power}, we obtain
\begin{equation*}\begin{split}
 A & = 1-3\lambda_{20}+\frac{2}{(\rxo)^3}+O(\cc)\\
  & = 1-\lambda_{20}+O(\cc)\\
  &=-\frac{1}{2}-\frac{3}{2}d_0+O(\cc)\\
  & < 0
\end{split}\end{equation*}
for $\cc$ small.

For $B=\Omega_{xx}\Omega_{yy}$, using \eqref{eqn:lambda0} and  the expansions \eqref{eqn:d_power} and \eqref{eqn:x_r_cube_inv_power}. we obtain
\begin{equation*}\begin{split}
 B&= -(3d_0)\left(5\lambda_{20}-\frac{2}{\rxo^3}\right)+O(\cc)\\
 &= -(3d_0)\left(5\lambda_{20}-2\lambda_{20}\right)+O(\cc)\\
 &=-9d_0\left(\frac{3}{2}+\frac{3}{2}d_0\right)\\
 &< 0
\end{split}\end{equation*}
for $\cc$ small.

For $D=A^2-4B$, using \eqref{eqn:lambda0} and  the expansions \eqref{eqn:d_power} and \eqref{eqn:x_r_cube_inv_power} we have
\begin{equation*}\begin{split}
 D &=\left(1-\lambda_{20}\right)^2+36d_0\lambda_{20}+O(\cc)\\
  & >0.
\end{split}\end{equation*}
for $\cc$ small.

We have proved the following result:

\begin{prop}\label{prop:x_lin_stab}
Consider the equilibria on the $x$-axis. For   $\mu\in(0,1/2]$  and for parameter $\cc$ negative and small enough, $\Omega_{zz}$ is  negative,  $A$ and $B$ are negative, and the value of the discriminant $D$ is always  positive. Consequently, two  pairs of eigenvalues are   purely imaginary, and one pair of eigenvalues are real (one positive and one negative). The linear stability is of  center $\times$ center $\times$ saddle type.
\end{prop}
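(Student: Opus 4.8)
The plan is to read off the linear stability from the factored characteristic polynomial \eqref{eqn:charcteristic}, exactly as in the $z$- and $y$-axis cases: at an $x$-axis equilibrium $(\pm\rx,0,0)$ one has $(\rho^2-\Omega_{zz})(\rho^4+A\rho^2+B)=0$ with $A=4-\Omega_{xx}-\Omega_{yy}$, $B=\Omega_{xx}\Omega_{yy}$, $D=A^2-4B$, so the whole proposition reduces to establishing the signs $\Omega_{zz}<0$, $A<0$, $B<0$, $D>0$ and then translating them into the eigenvalue pattern $\{\pm i\beta_1,\ \pm i\beta_2,\ \pm\alpha\}$.

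The engine of the sign analysis is a single inequality extracted from the equilibrium relation \eqref{eqn:x_eq}: writing it as $\lambda_2(\rx)^3=1-\tfrac{3\cc}{(\rx)^2}$ and using $\cc<0$ gives $\tfrac{1}{(\rx)^3}<\lambda_2$. Substituting this into the reduced expressions \eqref{eqn:x_Omega_second_der} yields at once $\Omega_{xx}=5\lambda_2-\tfrac{2}{(\rx)^3}>3\lambda_2>0$, while $\Omega_{yy}=\lambda_1-\lambda_2<0$ because $\lambda_2$ is the larger root in \eqref{eq18}; hence $B=\Omega_{xx}\Omega_{yy}<0$. Rewriting $\Omega_{zz}=-\tfrac{(\rx)^5+(\rx)^2-9\cc}{(\rx)^5}$ with $\rx>0$, $\cc<0$ gives $\Omega_{zz}<0$, so $\rho_{1,2}=\pm(\Omega_{zz})^{1/2}$ is a purely imaginary pair. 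For $A$, the same inequality together with $\lambda_1+\lambda_2=3$ (from \eqref{eq18}) gives $A=4-4\lambda_2-\lambda_1+\tfrac{2}{(\rx)^3}<4-2\lambda_2-\lambda_1=1-\lambda_2=-\tfrac12-\tfrac32 d<0$, using $\lambda_2=\tfrac32(1+d)$ and $d>0$; and then $D=A^2-4B>A^2>0$ follows for free from $B<0$.

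With the signs in hand the eigenvalue count is immediate: since $B<0$ one has $\sqrt D>|A|$, so the two roots $\tfrac{-A\pm\sqrt D}{2}$ of the quartic factor (viewed as a quadratic in $\rho^2$) have opposite signs; the positive one gives a real pair $\rho_{3,4}$ (one positive, one negative eigenvalue), and the negative one gives a purely imaginary pair $\rho_{5,6}$. Together with $\rho_{1,2}$ this is the center $\times$ center $\times$ saddle type, which is the assertion. To present the conclusion in the perturbative form stated above and to reconnect with the $\cc=0$ results of \cite{Burgos_Gidea}, I would alternatively plug the expansions $\rx=\rxo+r_{x1}\cc+O(\cc^2)$ and $d=d_0+d_1\cc+O(\cc^2)$ of \eqref{eqn:r_power}, \eqref{eqn:d_power}, with $\rxo^3=1/\lambda_{20}$, into the formulas \eqref{eqn:x_Omega_second_der}, obtaining $A=1-\lambda_{20}+O(\cc)$, $B=-9d_0(\tfrac32+\tfrac32 d_0)+O(\cc)$, $D=(1-\lambda_{20})^2+36 d_0\lambda_{20}+O(\cc)$; since $d_0\in[\tfrac12,1)$ and $\lambda_{20}\in[\tfrac94,3)$ uniformly for $\mu\in(0,\tfrac12]$, these leading terms are bounded away from zero, so the signs persist for all $\cc$ sufficiently small.

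The only delicate point is the bookkeeping of the signs of $A$, $B$, $D$, and the direct route above dispatches it with essentially no computation beyond the elementary estimate $\tfrac{1}{(\rx)^3}<\lambda_2$ and the relation $\lambda_1+\lambda_2=3$; so I foresee no genuine obstacle. In fact the direct argument even removes the ``$\cc$ small'' hypothesis, which is needed only if one prefers the power-series normalization rather than the direct estimate.
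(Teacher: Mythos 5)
Your proof is correct, and it reaches the same sign pattern ($\Omega_{zz}<0$, $A<0$, $B<0$, $D>0$) from the same reduced formulas \eqref{eqn:x_Omega_second_der} and the same factored characteristic equation \eqref{eqn:charcteristic}, but the way you establish the signs is genuinely different from the paper. The paper treats the $x$-axis case in the same perturbative style as the $y$- and $z$-axis cases: it expands $\rx=\rxo+r_{x1}\cc+O(\cc^2)$ with $\rxo^3=1/\lambda_{20}$ and $d=d_0+d_1\cc+O(\cc^2)$ (see \eqref{eqn:d_power}, \eqref{eqn:x_r_cube_inv_power}), evaluates $A$, $B$, $D$ at $\cc=0$ using \eqref{eqn:lambda0}, checks that the leading terms have the required signs, and concludes for $\cc$ negative and small; this keeps the exposition uniform across the three families of equilibria and ties directly to the $\cc=0$ results of \cite{Burgos_Gidea}. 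You instead extract the single pointwise inequality $1/(\rx)^3<\lambda_2$ from the equilibrium relation \eqref{eqn:x_eq} (valid for every $\cc<0$), and combine it with $\lambda_1+\lambda_2=3$ and $\lambda_2=\tfrac32(1+d)>\lambda_1$ from \eqref{eq18}, \eqref{eqn:lambda} to get $\Omega_{xx}>3\lambda_2>0$, $\Omega_{yy}=\lambda_1-\lambda_2<0$, hence $B<0$, $A<1-\lambda_2\le-\tfrac12<0$, and $D>A^2>0$, with $\Omega_{zz}<0$ exactly as in the paper. The eigenvalue count from $B<0$ (roots of the quadratic in $\rho^2$ of opposite signs) plus the $\Omega_{zz}<0$ pair is the same in both arguments. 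What your route buys is that it is non-perturbative: the conclusion holds for all $\cc<0$, so the ``$\cc$ small'' hypothesis in the statement is not needed for the $x$-axis equilibria, and no expansion bookkeeping (or uniformity-in-$\mu$ of the $O(\cc)$ remainders) is required; what the paper's route buys is a single uniform template that also handles the $y$-axis case, where the sign of $D$ genuinely changes with $\mu$ and a perturbative comparison with the $\cc=0$ problem is the natural tool.
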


\section*{Acknowledgements}\marginpar{MG: please check  acknowledgements}
This material is based upon work supported by the National Science Foundation under Grant No. DMS-1440140 while A.C. and M.G. were in residence at the Mathematical Sciences Research Institute in Berkeley, California, during the Fall 2018 semester.

This research was carried out (in part) at the Jet Propulsion Laboratory,
California Institute of Technology, under a contract with the National
Aeronautics and Space Administration and funded through the Internal
Strategic University Research Partnerships (SURP) program.

A.C. was partially supported by GNFM-INdAM and
acknowledges the MIUR Excellence Department Project awarded to the Department of Mathematics,
University of Rome Tor Vergata, CUP E83C18000100006.
M.G. and W-T.L. were partially supported by NSF grant DMS-0635607 and DMS-1814543.

We are grateful to Rodney Anderson, Edward Belbruno, Ernesto Perez-Chavela, and Pablo Rold\'an for discussions and comments.

\bibliographystyle{alpha}
\bibliography{hektor_references}

\end{document}